\definecolor{darkblue}{rgb}{0,0,1}
\newtheorem{theorem}{Theorem}
\newtheorem{lemma}{Lemma}
\newtheorem{remark}{Remark}
\newtheorem{assumption}{Assumption}
\newtheorem{corollary}{Corollary}
\begin{document}

\title{Communication-Censored Linearized ADMM for Decentralized Consensus Optimization}
\author{\authorblockN{Weiyu Li, Yaohua Liu, Zhi Tian, and Qing Ling}
\thanks{Weiyu Li is with the School of Gifted Young, University of Science and Technology of China, Hefei, Anhui 230026, China. Yaohua Liu is with the Department of Automation, University of Science and Technology of China, Hefei, Anhui 230026, China. Zhi Tian is with the Department of Electrical and Computer Engineering, George Mason University, Fairfax, VA 22030, USA. Qing Ling is with the School of Data and Computer Science, Sun Yat-Sen University, Guangzhou, Guangdong 510006, China. Zhi Tian is supported by NSF grants CCF-1527396 and IIS-1741338. Qing Ling is supported by China NSF grants 61573331 and 11728105. Part of this paper has appeared in the 44th IEEE International Conference on Acoustics, Speech, and Signal Processing, Brighton, UK, May 12--17, 2019. Corresponding Email: lingqing556@mail.sysu.edu.cn.}}

\maketitle

\begin{abstract}
	
In this paper, we propose a communication- and computation-efficient algorithm to solve a convex consensus optimization problem defined over a decentralized network. A remarkable existing algorithm to solve this problem is the alternating direction method of multipliers (ADMM), in which at every iteration every node updates its local variable through combining neighboring variables and solving an optimization subproblem. The proposed algorithm, called as \underline{co}mmunication-censored \underline{l}inearized \underline{A}DMM (COLA), leverages a linearization technique to reduce the iteration-wise computation cost of ADMM and uses a communication-censoring strategy to alleviate the communication cost. To be specific, COLA introduces successive linearization approximations to the local cost functions such that the resultant computation is first-order and light-weight. Since the linearization technique slows down the convergence speed, COLA further adopts the communication-censoring strategy to avoid transmissions of less informative messages. A node is allowed to transmit only if the distance between the current local variable and its previously transmitted one is larger than a censoring threshold. COLA is proven to be convergent when the local cost functions have Lipschitz continuous gradients and the censoring threshold is summable. When the local cost functions are further strongly convex, we establish the linear (sublinear) convergence rate of COLA, given that the censoring threshold linearly (sublinearly) decays to $0$. Numerical experiments corroborate with the theoretical findings and demonstrate the satisfactory communication-computation tradeoff of COLA.


\end{abstract}

\begin{IEEEkeywords}
	
	Decentralized network, consensus optimization,
	communication-censoring strategy, linearized approximation, alternating direction method of
	multipliers.
	
\end{IEEEkeywords}


\section{Introduction}
\label{sec:intro}

In this paper, we consider solving a convex consensus optimization problem
\begin{align}\label{eq:obj}
\tilde{x}^* = \arg\min\limits_{\tilde{x}} \sum_{i=1}^n
f_i(\tilde{x}),
\end{align}
which is defined over a bidirectionally connected decentralized network consisting of $n$ nodes. All the nodes cooperate to find an optimal argument $\tilde{x}^*$ of the common optimization variable $\tilde{x} \in \mathcal{R}^p$, but the convex local cost function $f_i(\tilde{x}):\mathcal{R}^p \rightarrow \mathcal{R}$ held by every node $i$ is kept private. We focus on the scenario that the nodes are unable to afford complicated computation, while the communication resources are also limited. Our goal is to devise a communication-efficient decentralized algorithm, which relies on light-weight computation, to solve \eqref{eq:obj}.

Decentralized consensus optimization has attracted extensive interest in recent years. Problems in the form of \eqref{eq:obj} are involved in a variety of research areas,
including wireless sensor networks \cite{Rabbat2004-ipsn, Schizas2008-1,sensor1}, communication networks \cite{zeng2011, Giannakis2015-ADMM}, multi-robot networks \cite{Bullo2009, Cao2013-TII}, smart grids \cite{Giannakis2013,grid1,Liu2017}, machine learning systems \cite{cluster1,Mokhtari-Thesis,Lian2017}, to name a few. Popular algorithms to solve \eqref{eq:obj} span from the primal domain to the dual domain. The primal domain algorithms, such as sub-gradient descent \cite{Nedic2009, Jakovetic2014, Yuan2016}, dual averaging \cite{Duchi2012, Tsianos2012,Lee2017} and network Newton \cite{Mokhtari2017-NN}, have to use diminishing step sizes to guarantee exact convergence to an optimal solution, and thus suffer from slow convergence. On the other hand, \eqref{eq:obj} can be reformulated as a constrained optimization problem and solved by dual domain algorithms, among which the celebrated alternating direction method of multipliers (ADMM) is able to achieve fast and exact convergence \cite{Schizas2008-1, Boyd2010, Mateos2010, Shi2014-ADMM}. When ADMM is implemented in a synchronous manner, at every iteration, every node solves an optimization subproblem dependent on its local cost function, and then exchanges the calculated local variable with its neighbors. Therefore, if the local cost functions are not in simple forms, solving the subproblems is computationally demanding. To alleviate the computation cost, the decentralized linearized ADMM (DLM) replaces the local cost functions in ADMM by their linear approximations, and attains a dual domain method with light-weight computation \cite{Ling2015-DLM, Chang2015}. Similar techniques have also been applied to develop other first-order dual domain algorithms, such as EXTRA \cite{Shi2015}, NEXT \cite{Lorenzo2016}, and gradient tracking methods \cite{Sun2016, Qu2017, Nedic2017, Xin2018, Pu2018}. If computing the inverse of a Hessian matrix is affordable at a node, one can replace the local cost functions by their quadratic approximations. The resultant second-order algorithms, DQM and ESOM, have faster convergence than their first-order counterparts \cite{Mokhtari2016-DQM, Mokhtari2016-ESOM}. Between the first- and second-order algorithms, a recent work in \cite{Eisen2018} develops a primal-dual quasi-Newton method that approximates the second-order information with local gradients. The lower complexity bounds and rate-optimal algorithms of decentralized optimization are developed in \cite{Scaman2017, Scaman2018, Sun2018}. Note that the communication cost in the aforementioned algorithms is proportional to the number of iterations, since after a given number of iterations every node needs to communicate with its neighbors.

In all decentralized algorithms, there is an essential communication-computation tradeoff \cite{Pereira2012, Berahas2017, Lan2017, Nedic20176, Tang2018}. An algorithm with light-weight iteration-wise computation generally needs more number of iterations, and in consequence more communication cost, to reach a target accuracy. For example, compared with ADMM, DLM enjoys simple gradient-based computation, but suffers from relatively slow convergence speed and high communication cost. In this paper, we aim at achieving a favorable communication-computation tradeoff in a decentralized network, where the nodes are only affordable to light-weight gradient-based computation. The limitation on the computation power may come from that the nodes are equipped with cheap computing units in a wireless sensor network, or from that using higher-order information is prohibitively time-consuming for finding a high-dimensional solution in a machine learning system.

Given the constraint on the computation cost, we adopt the communication-censoring strategy to further save the communication cost. The basic idea of the communication-censoring strategy is to only allow transmissions of informative messages over the network. A simple yet powerful protocol is to prevent a node from transmitting a variable that is close to its previously transmitted one, where the ``closeness'' is determined by comparing the Euclidean distance with a predefined time-varying censoring threshold. The communication-censoring strategy is tightly related to event-triggered control of continuous-time networks \cite{Johansson2012,Garcia2013,Cameron2016}, and finds successful applications in discrete-time decentralized optimization \cite{Lu2017, Tsianos2013, Chen2016, COCA}. It has been combined with primal domain methods such as sub-gradient descent \cite{Lu2017} and dual averaging \cite{Tsianos2013}, as well as dual domain methods such as dual decomposition \cite{Chen2016} and ADMM \cite{COCA}. However, similar to their uncensored counterparts, the primal domain methods in \cite{Lu2017, Tsianos2013} have to use diminishing step sizes to guarantee exact convergence. On the other hand, the dual domain methods in \cite{Chen2016, COCA} require the nodes to solve computationally demanding subproblems. Our proposed algorithm, called as communication-censored linearized ADMM (COLA), combines the communication-censoring strategy with the first-order dual domain method DLM. Particularly, we modify the standard communication-censoring strategy in \cite{Lu2017,Tsianos2013,Chen2016,COCA} to fit for the special algorithmic structure of DLM so as to attain better performance. We rigorously establish convergence as well as sublinear and linear convergence rates of COLA. To the best of our knowledge, COLA is the first communication-censored method that only uses gradient information but achieves linear convergence.

Starting from the derivation of the classical ADMM in Section \ref{sec:algo-admm}, we introduce COLA in Section \ref{sec:algo-cola}. COLA modifies ADMM in two aspects. First, linearizing the local cost functions enables approximately solving the time-consuming subproblems in ADMM, and thus saves computation. Second, the communication-censoring strategy is applied to remedy the poor communication efficiency caused by the linearization step. To further demonstrate the design principles of COLA, its tradeoff between communication and computation is discussed and compared with those of several existing dual domain algorithms in Section \ref{sec:algo-tradeoff}. In Section \ref{sec:conv}, we prove that when the censoring threshold is properly chosen, COLA converges to an optimal solution of \eqref{eq:obj} (Theorem \ref{theorem:convergence}). Moreover, when the local cost functions are strongly convex, the linear and sublinear convergence rates of COLA are established (Theorems \ref{theorem:linear} and \ref{theorem:sublin}). The analysis provides guidelines for choosing the parameters of COLA to reduce computation and communication costs. Section \ref{sec:nume} presents numerical experiments and demonstrates the communication-computation tradeoff of COLA. Section \ref{sec:con} summarizes our work.


\textit{Notation.} For matrices $A\in\mathcal{R}^{a\times n}$ and $B\in\mathcal{R}^{b\times n}$, $[A;B]\in\mathcal{R}^{(a+b)\times n}$ stacks the two matrices by rows. Define the inner product of two vectors $v_1$ and $v_2$ as $\langle v_1,v_2 \rangle:=v_1^Tv_2$, which naturally induces the Euclidean norm $\|v\|:=\sqrt{\langle v,v \rangle}$ of a vector $v$. For a matrix $M$, define $\lambda_{\min}(M)$ as the smallest eigenvalue, $\sigma_{\max}(M)$ as the largest singular value, and $\tilde{\sigma}_{\min}(M)$ as the smallest nonzero singular value. When $M$ is a block matrix, $(M)_{i,j}$ denotes its $(i,j)$-th block.

Throughout the paper, we consider a bidirectionally connected network $\mathcal{G}=\{\mathcal{V},\mathcal{A}\}$, where $\mathcal{V}=\{1,\ldots,n\}$ denotes the set of $n$ nodes
and $\mathcal{A}=\{1,\ldots,m\}$ is the set of $m$ directed arcs. Nodes $i$ and $j$ are called as neighbors if $(i,j)\in\mathcal{A}$ and $(j,i)\in\mathcal{A}$. We denote the set of node $i$'s neighbors as $\mathcal{N}_i$ with cardinality $d_{ii} =|\mathcal{N}_i|$. Further define the extended block arc source matrix $A_s\in\mathcal{R}^{mp\times np}$ containing $m\times n$ square blocks $(A_s)_{e,i}\in\mathcal{R}^{p\times p}$. The block $(A_s)_{e,i}=I_p$ if the arc $e=(i,j)\in\mathcal{A}$ and is null otherwise, where $I_p$ is the $p$-dimensional identity matrix. Likewise, define the extended block arc destination matrix $A_d\in\mathcal{R}^{mp\times np}$, whose block $(A_d)_{e,j}\in\mathcal{R}^{p\times p}$ is not null but $I_p$ if and only if the arc $e=(i,j)\in\mathcal{A}$ terminates at node $j$. Then, define the extended oriented incidence matrix as $G_o=A_s-A_d$ and the unoriented one as $G_u=A_s+A_d$. The oriented Laplacian is written as $L_o=\frac{1}{2}G_o^T G_o$ and the unoriented Laplacian $L_u=\frac{1}{2}G_u^T G_u$. The degree matrix is defined as $D=\frac{1}{2}(L_o+L_u)$, which is block diagonal with diagonal blocks $D_{i,i} = d_{ii}I_p$.

\section{Algorithm Development}
\label{sec:algo}

In this section, we propose COLA, the communication-censored linearized ADMM to solve the decentralized consensus optimization problem \eqref{eq:obj}. Rooted on ADMM, COLA features in two ingredients, \textit{linearization} to reduce the computation cost and \textit{communication censoring} to reduce the communication cost. We shall first introduce the development of ADMM in Section \ref{sec:algo-admm}, and then combine the linearization and communication-censoring techniques to devise COLA in Section \ref{sec:algo-cola}. The tradeoff between computation and communication is discussed in Section \ref{sec:algo-tradeoff}.

\subsection{ADMM: Alternating Direction Method of Multipliers}
\label{sec:algo-admm}

ADMM is a powerful tool to solve a structured optimization problem with two blocks of variables, which are separable in the cost function and subject to a linear equality constraint. To rewrite \eqref{eq:obj} into the standard bivariate form, we introduce local variables $x_i\in\mathcal{R}^p$ as copies of $\tilde{x}$ at nodes $i$, and auxiliary variables $z_{ij}\in\mathcal{R}^p$ at arcs $(i,j)\in\mathcal{A}$. Since the network is connected, \eqref{eq:obj} is equivalent to
\begin{align}\label{eq:obj-ADM-node}
\min\limits_{\{x_i\}, \{z_{ij}\}} \quad &\sum_{i=1}^{n}f_i(x_i), \nonumber\\
\rm{s.t.} \quad &x_i=z_{ij},\ x_j=z_{ij}, ~ \forall (i,j)\in\mathcal{A}.
\end{align}
An optimal solution of \eqref{eq:obj-ADM-node} satisfies $x_i^*=\tilde{x}^*$ and $z_{ij}^*=\tilde{x}^*$, where $\tilde{x}^*$ is an optimal solution of \eqref{eq:obj}.

Concatenate the variables as $x=[x_1;\ldots;x_n]\in\mathcal{R}^{np}$ and $z=[z_1;\ldots;z_m]\in\mathcal{R}^{mp}$, introduce the aggregate function $f(x):=\sum_{i=1}^{n}f_i(x_i)$, and denote $A:=\left[ A_s; A_d \right]\in\mathcal{R}^{2mp\times np}$ and $B:=\left[ -I_{mp} ; -I_{mp} \right]$. The matrix form of \eqref {eq:obj-ADM-node} is
\begin{equation}\label{eq:obj-ADM}
\min \limits_{x, z}\ f(x), \quad {\rm s.t.} \ Ax+Bz=0,
\end{equation}
which is the standard bivariate form handled by ADMM, except that the variable $z$ is absent in the cost function.

Introduce the augmented Lagrangian of \eqref{eq:obj-ADM} as $$L(x,z,\lambda)=f(x)+ \langle \lambda, Ax+Bz \rangle+\frac{c}{2}\|Ax+Bz\|^2,$$ where the penalty parameter $c>0$ is an arbitrary positive constant and the Lagrange multiplier $\lambda:=[\phi;\psi]\in\mathcal{R}^{2mp}$. The two vectors $\phi$, $\psi\in\mathcal{R}^{mp}$ are the Lagrangian multipliers
associated with the two constraints $A_sx-z=0$  and $A_dx-z=0$ respectively. At time $k$, the ADMM update follows
\begin{align}
x^{k+1}       & = \arg\min_x L(x,z^k,\lambda^k), \nonumber \\
z^{k+1}       & = \arg\min_z L(x^{k+1},z,\lambda^k), \nonumber \\
\lambda^{k+1} & = \lambda^{k} + c(Ax^{k+1}+Bz^{k+1}). \nonumber
\end{align}

According to \cite{Shi2014-ADMM}, if the variables are initialized with $\phi^0=-\psi^0$ and $G_ux^0=2z^0$, then we can eliminate $z^{k+1}$ and replace $\lambda^{k+1}$ by a lower-dimensional dual variable, such that the update is reduced to
\begin{align}\label{admm-x}
x^{k+1}      & = \arg\min_x\ f(x) + \langle \mu^k - cL_u x^k, x\rangle + cx^TDx, \\
\mu^{k+1}    & = \mu^k + cL_o x^{k+1}, \label{admm-mu}
\end{align}
where $\mu^k := G_o^T \phi^k \in \mathcal{R}^{np}$. By splitting $\mu^k = [\mu_1^k, \ldots, \mu_n^k]$, $\mu_i^k \in \mathcal{R}^p$ denotes the local dual variable of node $i$.

Using the definitions of $f(x)$, $D$, $L_u$ and $L_o$, we describe how the decentralized ADMM is implemented. At time $k$, every node $i$ updates its local primal variable $x_i^{k+1}$ using its $x_i^k$ and $\mu_i^k$, as well as $x_j^k$ from all neighbors $j$ via
\begin{align}\label{admm-x-node}
x_{i}^{k+1}  =& \arg\min\limits_{x_i} f_i(x_i) + \langle \mu_i^k - c \sum_{j \in \mathcal{N}_i} (x_i^k + x_j^k), x_i \rangle+ cd_{ii} x_i^2.
\end{align}
Then node $i$ broadcasts its $x_i^{k+1}$ to all neighbors. Finally, node $i$ updates its local dual variable $\mu_i^{k+1}$ using its $x_i^{k+1}$ and $\mu_i^k$, as well as $x_j^{k+1}$ from all neighbors $j$ via
\begin{equation}
\mu_i^{k+1}  = \mu_i^{k} + c\sum_{j\in \mathcal{N}_i} (x_{i}^{k+1}-x_{j}^{k+1}).\label{admm-mu-node}
\end{equation}

The costs of implementing ADMM are two-fold. The first is in computing the local primal and dual variables $x_i^{k}$ and $\mu_i^{k}$, in which the update of $x_i^{k}$ in \eqref{admm-x-node} is particularly demanding when the local cost function $f_i(x_i)$ is complicated. The second is in transmitting the local primal variables $x_i^{k+1}$, which is expensive when the bandwidth resource is limited.

\subsection{COLA: Communication-Censored Linearized ADMM}
\label{sec:algo-cola}

COLA adopts two strategies to improve the computation and communication efficiency of ADMM: linearization and communication censoring. The linearization technique has been used in \cite{Ling2015-DLM, Chang2015} to devise DLM, a gradient-based variant of ADMM. DLM effectively reduces the computation cost of solving subproblems in ADMM, but sacrifices on the convergence speed and thus results in high communication cost. Therefore, we use the communication-censoring strategy to prevent transmissions of less informative messages. Note that though the communication-censoring strategy has been applied to improve the communication efficiency of sub-gradient descent, dual averaging, dual decomposition and ADMM \cite{Lu2017,Tsianos2013,Chen2016,COCA}, we customize it in COLA so as to achieve a satisfactory balance between communication and computation, as we shall explain below.

%

\textit{Linearization.} Notice that the update of the primal variable $x_i^{k+1}$ in \eqref{admm-x-node}, which usually has no explicit solution, dominates the computation cost of ADMM. Therefore, a computationally demanding inner loop should be used to solve $x_i^{k+1}$. To address this issue, \cite{Ling2015-DLM, Chang2015} linearizes the local cost functions at every iteration. To be specific, at time $k$, the function $f_i(x_i)$ in \eqref{admm-x-node} is replaced by its quadratic approximation $f_i(x_i^k) + \langle \nabla f_i(x_i^k), x_i-x_i^k \rangle + \frac{\rho}{2}\|x_i-x_i^k\|^2$ at $x_i=x_i^k$, where $\rho > 0$ is a positive linearization parameter. Therefore, the primal variable is updated via
\begin{align}\label{dlm-x-node}
x_{i}^{k+1}  =& x_i^k - \frac{1}{2cd_{ii}+\rho}\big(  \nabla f_i(x_i^k) + c\sum_{j\in \mathcal{N}_i} (x_{i}^{k}-x_{j}^{k}) + \mu_i^{k} \big).
\end{align}
Note that the main computation cost of \eqref{dlm-x-node} is in calculating the gradient $\nabla f_i(x_i^k)$, which is light-weight. The update of dual variable remains the same as \eqref{admm-mu-node} in ADMM.

\textit{Communication censoring.} The linearization technique significantly reduces the computation cost of ADMM, but slows down the convergence speed, and hence results in high communication cost. Hence, we introduce the communication-censoring strategy to further reduce the communication cost. Intuitively, when $x_i^{k+1}$ is close to $x_i^k$, it is not necessary for node $i$ to transmit both of them to neighbors. Motivated by this fact, the communication-censoring strategy prevents transmissions of less informative messages so as to reduce the communication cost.

To rigorously explain the communication-censoring strategy, define a state variable $\hat{x}_i^{k} \in \mathcal{R}^p$ as the latest value that node $i$ has transmitted to neighbors before time $k$. At time $k$, after calculating $x_i^{k+1}$, node $i$ evaluates the difference between $\hat{x}_i^k$ and $x_i^{k+1}$ by their Euclidean distance
%
$\xi_i^{k+1} = \| \hat{x}_i^{k} - x_i^{k+1} \|, $
%
and then compares the difference with a predefined censoring threshold $\tau^{k+1} \geq 0$. Node $i$ is allowed to transmit $x_i^{k+1}$ to neighbors and update $\hat{x}_i^{k+1}=x_i^{k+1}$, if and only if
%
$\xi_i^{k+1} \geq \tau^{k+1}.$
%
Otherwise, the transmission is censored and $\hat{x}_i^{k+1}=\hat{x}_i^{k}$. With the state variable $\hat{x}_i^{k}$, COLA changes the DLM updates in \eqref{dlm-x-node} and \eqref{admm-mu-node} to
\begin{align}\label{cola-x-node}
x_{i}^{k+1}  &= x_{i}^{k} - \frac{1}{2cd_{ii}+\rho}  \big( \nabla f_i(x_{i}^{k})
+ c\sum\limits_{j \in \mathcal{N}_i} (\hat{x}_{i}^{k} - \hat{x}_{j}^{k}) + \mu_i^k \big),  \\
\mu_i^{k+1} &= \mu_i^{k} + c\sum_{j\in \mathcal{N}_i} (\hat{x}_{i}^{k+1}-\hat{x}_{j}^{k+1}). \label{cola-mu-node}
\end{align}
Stacking the state variables in $\hat{x}=[\hat{x}_1;\ldots;\hat{x}_n]\in\mathcal{R}^{np}$, we can write \eqref{cola-x-node} and \eqref{cola-mu-node} in the matrix form of
\begin{align}\label{cola-x}
\hspace{-1em}x^{k+1}&=x^k - (2cD+\rho I)^{-1} \left(\nabla f(x^k)  + cL_o\hat{x}^{k} + \mu^k \right), \\
\hspace{-1em}\mu^{k+1} &= \mu^k + cL_o \hat{x}^{k+1}.\label{cola-mu}
\end{align}

COLA run by node $i$ is outlined in Algorithm \ref{alg:1}. At time $0$, node $i$ initializes its local variables to $x_{i}^0=0$, $\mu_i^0=0$, $\hat{x}_i^0 = 0$ and $\hat{x}_j^0 = 0$ for all $j \in \mathcal{N}_i$. For all times $k$, node $i$ first computes its local primal variable $x_i^{k+1}$ by \eqref{cola-x-node}. The computation of $x_i^{k+1}$ at node $i$ is based on its latest local primal-dual variables $x_i^k$ and $\mu_i^{k}$, the latest broadcast information $\hat{x}_i^k$ of itself and $\hat{x}_j^k$ from its neighbors $j$, as well as the gradient of the local cost function $f_i(x_i)$ at $x_i=x_i^k$. Then $\xi_i^k$, the difference between the newly computed primal variable $x_i^{k+1}$ and the previously transmitted $\hat{x}_i^k$ is calculated and denoted by $\xi_i^{k+1}$. If $\xi_i^{k+1} \geq \tau^{k+1}$, meaning that the difference exceeds the threshold to communicate, node $i$ transmits $x_i^{k+1}$ to neighbors and lets $\hat{x}_i^{k+1} = x_i^{k+1}$. Otherwise, node $i$ does not transmit and lets $\hat{x}_i^{k+1} = \hat{x}_i^k$. On the other hand, if node $i$ receives ${x}_j^{k+1}$ from any neighbor $j$, then it lets $\hat{x}_j^{k+1}={x}_j^{k+1}$. Otherwise, it lets $\hat{x}_j^{k+1} = \hat{x}_j^{k}$. Observe that this communication protocol guarantees that node $i$ and its neighbors store the same state variable $\hat{x}_i^{k+1}$. Finally, the local dual variable $\mu_i^{k+1}$ is updated by \eqref{cola-mu-node}.

\begin{algorithm}[t]
	{\small
		\caption{COLA Run by Node $i$}
		\begin{algorithmic}[1]\label{alg:1}
			\REQUIRE Initialize local variables to $x_{i}^0=0$, $\mu_i^0=0$, $\hat{x}_i^0 = 0$ and $\hat{x}_j^0 = 0$ for all $j \in \mathcal{N}_i$.
			\FOR {times $k = 0, 1, \cdots$}
			\STATE Compute local primal variable $x_i^{k+1}$ by
			\begin{align*}
			x_{i}^{k+1}  = x_{i}^{k} - \frac{1}{2cd_{ii}+\rho}  \left(\nabla f_i(x_{i}^{k})  + c\sum\limits_{j \in \mathcal{N}_i} (\hat{x}_{i}^{k} - \hat{x}_{j}^{k}) + \mu_i^k \right). \nonumber
			\end{align*}
			\STATE Compute $\xi_i^{k+1} = \| \hat{x}_i^{k} - x_i^{k+1}\|$.
			\STATE If $\xi_i^{k+1} \geq \tau^{k+1}$, transmit $x_i^{k+1}$ to
			neighbors and let $\hat{x}_i^{k+1} = x_i^{k+1}$;
			else do not transmit and let $\hat{x}_i^{k+1} = \hat{x}_i^{k}$.
			\STATE If receive ${x}_j^{k+1}$ from any neighbor $j$, let $\hat{x}_j^{k+1}={x}_j^{k+1}$; else let
			$\hat{x}_j^{k+1} = \hat{x}_j^{k}$.
			\STATE Update local dual variable $\mu_i^{k+1}$ as
			\begin{align*}
			\mu_i^{k+1}& = \mu_i^{k} + c\sum_{j\in \mathcal{N}_i} (\hat{x}_{i}^{k+1}-\hat{x}_{j}^{k+1}). \nonumber
			\end{align*}
			\ENDFOR
	\end{algorithmic}}
\end{algorithm}

\begin{remark}

Comparing \eqref{dlm-x-node} and \eqref{admm-mu-node} with \eqref{cola-x-node} and \eqref{cola-mu-node}, we observe that the only difference between DLM and COLA is replacing $c\sum_{j \in \mathcal{N}_i} (x_{i}^{k} - x_{j}^{k})$ by $c\sum_{j \in \mathcal{N}_i} (\hat{x}_{i}^{k} - \hat{x}_{j}^{k})$ in the primal-dual updates. This is not the standard strategy used in the other communication-censored algorithms \cite{Lu2017, Tsianos2013, Chen2016, COCA}, where all the local primal variables $x_i$ are replaced by the state variables $\hat{x}_i$. We customize the communication censoring strategy for COLA and keep the local primal variable $x_i^k$ in $x_{i}^{k} - \frac{1}{2cd_{ii}+\rho} \nabla f_i(x_{i}^{k})$ as it is, because $x_i^k$ has already been available for node $i$, and is more up-to-date than $\hat{x}_i^k$. Recall that the term $x_{i}^{k} - \frac{1}{2cd_{ii}+\rho} \nabla f_i(x_{i}^{k})$ comes from the linearization of $f_i(x_i)$. Intuitively, linearization around $x_i =x_i^k$ leads to faster convergence than linearization around $x_i = \hat{x}_i^k$, which has been validated in our preliminary numerical experiments. On the other hand, we do not change the state variables $\hat{x}_i^k$ by the corresponding local primal variables $x_i^k$ in the term $c\sum_{j \in \mathcal{N}_i} (\hat{x}_{i}^{k} - \hat{x}_{j}^{k})$ in both primal and dual updates. Note that $x_i^k$ and $\hat{x}_i^k$ are not equal when communication censoring happens and the error between them is determined by the censoring threshold $\tau^k$, while the dual update \eqref{cola-mu-node} accumulates all the previous differences between the neighboring state variables $\hat{x}_i^k$ and $\hat{x}_j^k$. Thus, replacing the state variables $\hat{x}_i^k$ therein by the corresponding local primal variables $x_i^k$ shall accumulate the errors, and result in instability or even divergence of the recursion.

\end{remark}

The censoring threshold $\tau^k$ is a critical factor that influences the communication-computation tradeoff of COLA. Setting a large $\tau^k$ prevents less-informative transmissions, and thus reduces the iteration-wise communication cost, though the recursion needs more number of iterations and hence more computation cost to reach a target accuracy. However, a too large $\tau^k$ slows down the convergence speed, which in turn increases both the overall computation and communication costs. Since $\tau^k$ sets an upper bound for the distance between $x_i^k$ and $\hat{x}_i^k$, a small improvement of the local primal variable $x_i^k$ cannot be accepted to the state variable $\hat{x}_i^k$ and diffused to the network. In this sense, the primal variable cannot converge faster than $\tau^k$. We shall give rigorous analysis on this issue in the theoretical analysis.

If we expect to obtain a linear rate of convergence, a choice for the censoring threshold will be
\begin{align}\label{fun:trigger}
\tau^k =\alpha\cdot(\beta)^k,
\end{align}
where $\beta \in (0,1)$ and $\alpha > 0$ are constants. If $\tau^k$ is set as $\alpha\cdot(k)^{-r}$ with $r > 1$, a sublinear rate depending on $r$ will be derived.
A special case is $\tau^k=0$ for all times $k$, meaning that there is no censoring and COLA degenerates to DLM.

\subsection{Tradeoff between Communication and Computation}
\label{sec:algo-tradeoff}

Here we discuss the communication-computation tradeoff in ADMM, DLM, as well as their communication-censored versions, COCA and COLA.

Generally speaking, among the four algorithms, ADMM needs the least number of iterations to reach a target accuracy, but the computation cost of solving subproblems is often remarkable. DLM alleviates the iteration-wise computation cost through linearization, but requires more number of iterations and higher overall communication cost than ADMM.

The communication-censoring strategy in COCA and COLA adjusts the communication-computation tradeoff through tuning the censoring threshold $\tau^k$. As we have discussed in Section \ref{sec:algo-cola}, a larger $\tau^k$ leads to more iterations and thus higher computation cost, but lower iteration-wise communication cost. Regarding the overall communication cost required to reach a target accuracy, there is a phase transition in tuning $\tau^k$. When $\tau^k$ is too large, communication censoring is too often and much more number of iterations is necessary to compensate the information loss, which would deteriorate the overall communication cost.

Though COCA and COLA both adopt the communication-censoring strategy, their application scenarios are different. COCA fits for applications where computation of solving complicated subproblems is not an issue, but communication is the main bottleneck. Examples include distributed resource allocation in a data center network and collaborative target tracking in a radar network. On the contrary, COLA inherits the advantage of light-weight computation from DLM, and further reduces the communication cost on top of it. In this sense, COLA fits for applications where nodes are unable to afford solving complicated subproblems due to hardware or time constraints, such as an IoT network equipped with cheap computation units and a drone network cruising in a fast changing environment.

\begin{figure}
	\begin{center}
		\includegraphics[height=48mm]{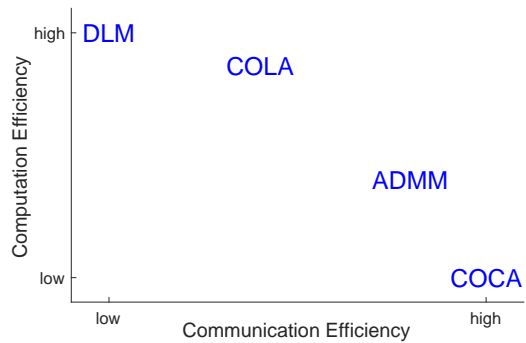}
		\caption{Illustration of the tradeoff between computation efficiency and communication efficiency in ADMM, DLM, COCA and COLA. \label{fig:comp-comm}}
	\end{center}
\end{figure}

An illustration of the tradeoff between computation efficiency and communication efficiency in ADMM, DLM, COCA and COLA is given by Fig. \ref{fig:comp-comm}.

\section{Convergence and Rates of Convergence}
\label{sec:conv}

In this section, we prove that COLA converges to an optimal solution of the convex consensus optimization problem \eqref{eq:obj} under mild conditions. Further, if the local cost functions are strongly convex, COLA converges to the unique optimal solution of \eqref{eq:obj} at a linear or sublinear rate, depending on the choice of the censoring threshold. Section \ref{subsec:pre} provides assumptions and lemmas for the proofs. Section \ref{subsec:conv} analyzes the convergence of COLA, while linear and sublinear rates are established in Section \ref{subsec:linconv}.

\subsection{Preliminaries}
\label{subsec:pre}

We make the following assumptions for the analysis. Assumptions \ref{ass:conn}--\ref{ass:init} are sufficient for proving the convergence of COLA to an optimal solution of \eqref{eq:obj}. Further with Assumption \ref{ass:strong}, COLA is guaranteed to converge to the unique optimal solution of \eqref{eq:obj} at a linear (sublinear) rate when the censoring threshold is linearly (sublinearly) decaying to $0$.

\begin{assumption}
	[\textbf{Network connectivity}]\label{ass:conn} The communication graph $\mathcal{G}=\{\mathcal{V},\mathcal{A}\}$ is bidirectionally connected.
\end{assumption}

\begin{assumption}
	[\textbf{Convexity and differentiability}]\label{ass:convex} The local cost functions $f_i$ are convex and differentiable.
\end{assumption}

\begin{assumption} [\textbf{Lipschitz continuous gradients}]\label{ass:Lip}
	The gradients of the local cost functions $\nabla f_i$ are Lipschitz continuous with constant $M>0$. That is, given any $\tilde{x},\tilde{y} \in \mathcal{R}^p$, $\|\nabla f_i(\tilde{x})-\nabla f_i(\tilde{y})\| \leq M\|\tilde{x}-\tilde{y}\| $ for any $i$.
\end{assumption}

\begin{assumption} [\textbf{Initialization}]\label{ass:init}
	The dual variable $\mu$ of COLA is initialized in the column space of $G_o^T$. That is, there exists a vector $\phi^0\in\mathcal{R}^{mp}$ such that $\mu^0 = G_o^T \phi^0$.
\end{assumption}

\begin{assumption} [\textbf{Strong convexity}]\label{ass:strong}
	The local cost functions $f_i$ are strongly convex with constant $m>0$. That is, given any $\tilde{x},\tilde{y} \in \mathcal{R}^p$, $\langle \nabla	f_i(\tilde{x})-\nabla f_i(\tilde{y}), \tilde{x}-\tilde{y} \rangle \geq m\|\tilde{x}-\tilde{y}\|^2$ for any $i$.
\end{assumption}

Assumptions \ref{ass:conn}, \ref{ass:convex}, \ref{ass:Lip} and \ref{ass:strong} are standard in analysis of decentralized algorithms. The initial condition in Assumption \ref{ass:init} can be easily satisfied, with the simplest choice $\mu^0=0$.

COLA involves a primal sequence $\{x^k\}$ and a dual sequence $\{\mu^k\}$. In the theoretical analysis, we shall construct a triple $(x^k,z^k,\phi^k)$ from the pair $(x^k,\mu^k)$, and prove its convergence to $(x^*,z^*,\phi^*)$, which is optimal to \eqref{eq:obj-ADM}. Here $z^k, z^*, \phi^k, \phi^* \in\mathcal{R}^{mp}$. The next lemma gives the properties of $(x^*,z^*,\phi^*)$.
\begin{lemma}(Lemma 1, \cite{Ling2015-DLM})\label{lemma1}
	Given a primal optimal solution $x^*$ of \eqref{eq:obj-ADM} and $z^* := \frac{1}{2} G_u x^*$, there exist multiple optimal dual variables $[\phi^*;-\phi^*]$ such that every $(x^*,z^*,[\phi^*;-\phi^*])$ is a primal-dual optimal solution of \eqref{eq:obj-ADM}. Among all these optimal dual variables, there exists a unique $[\phi^*;-\phi^*]$ in which $\phi^*$ lies in the column space of $G_o$. Moreover, any primal-dual optimal solution $(x^*,z^*,[\phi^*;-\phi^*])$ satisfies the KKT conditions
	\begin{align}
	\nabla f(x^{*})+ G_o^T\phi^{*}&=0 \label{kkt-1}, \\
	G_o {x}^{*} &= 0 \label{kkt-2}, \\
	\frac{1}{2} G_u {x}^{*} & = z^* \label{kkt-3}.
	\end{align}
\end{lemma}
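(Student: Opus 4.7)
My plan is to work directly with the KKT system of the equality-constrained bivariate problem \eqref{eq:obj-ADM}. Since $f$ is convex (Assumption \ref{ass:convex}) and the constraints $Ax+Bz=0$ are affine, strong duality holds with no further constraint qualification required, so any primal optimum $(x^*,z^*)$ admits some dual multiplier $[\phi^*;\psi^*]\in\mathcal{R}^{2mp}$ for which the KKT conditions are both necessary and sufficient.

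First I would write the Lagrangian as $L(x,z,[\phi;\psi])=f(x)+\phi^T(A_s x - z)+\psi^T(A_d x - z)$ and extract three pieces of information. Stationarity in $z$ yields $-\phi^*-\psi^*=0$, i.e., $\psi^*=-\phi^*$, which is exactly the $[\phi^*;-\phi^*]$ form of the optimal dual asserted in the lemma. Substituting $\psi^*=-\phi^*$ into stationarity in $x$ gives $\nabla f(x^*)+A_s^T\phi^*-A_d^T\phi^*=\nabla f(x^*)+G_o^T\phi^*=0$, which is \eqref{kkt-1}. The two primal-feasibility relations $A_s x^*=z^*$ and $A_d x^*=z^*$ then combine additively into $\frac{1}{2}G_u x^*=z^*$, giving \eqref{kkt-3} and in passing forcing the choice $z^*=\frac{1}{2}G_u x^*$ used in the statement, and subtractively into $G_o x^*=0$, giving \eqref{kkt-2}.

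The remaining claim concerns existence and uniqueness of a dual variable whose top block $\phi^*$ lies in $\mathrm{range}(G_o)$. For existence, the argument above produces at least one $\phi^*$ satisfying \eqref{kkt-1}; orthogonally decomposing it as $\phi^*=\phi^*_{\mathrm{r}}+\phi^*_{\mathrm{n}}$ with $\phi^*_{\mathrm{r}}\in\mathrm{range}(G_o)$ and $\phi^*_{\mathrm{n}}\in\mathrm{null}(G_o^T)$, the second summand is annihilated by $G_o^T$, so $\phi^*_{\mathrm{r}}$ alone still satisfies \eqref{kkt-1} and lies in the desired subspace. For uniqueness, if $\phi^*_1,\phi^*_2\in\mathrm{range}(G_o)$ both satisfy \eqref{kkt-1}, then $\phi^*_1-\phi^*_2\in\mathrm{range}(G_o)\cap\mathrm{null}(G_o^T)=\{0\}$, since these subspaces are orthogonal complements in $\mathcal{R}^{mp}$ by the fundamental theorem of linear algebra.

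I do not anticipate any real obstacle here: the entire proof is routine KKT bookkeeping together with the orthogonal decomposition $\mathcal{R}^{mp}=\mathrm{range}(G_o)\oplus\mathrm{null}(G_o^T)$. The only place that deserves a sentence of justification is the appeal to strong duality, but linearity of the constraints supplies that automatically. Since the statement is quoted verbatim from \cite{Ling2015-DLM}, this sketch is essentially a reconstruction of the proof already established there.
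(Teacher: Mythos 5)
Your argument is correct: the stationarity conditions in $z$ and $x$ give $\psi^*=-\phi^*$ and \eqref{kkt-1}, the sum and difference of the two feasibility constraints give \eqref{kkt-3} and \eqref{kkt-2}, and the orthogonal decomposition $\mathcal{R}^{mp}=\mathrm{range}(G_o)\oplus\mathrm{null}(G_o^T)$ settles existence and uniqueness of the representative $\phi^*$ in the column space of $G_o$. The paper itself offers no proof of this lemma --- it is quoted with a citation to \cite{Ling2015-DLM} --- and your reconstruction matches the standard KKT argument given there, so there is nothing to flag beyond the minor point that dual attainment for affine equality constraints follows directly from $\nabla f(x^*)\perp\mathrm{null}(A)$ rather than needing a strong-duality appeal.
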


According to Lemma \ref{lemma1}, it is natural to construct $z^k := \frac{1}{2} G_u x^k \in \mathcal{R}^{mp}$. To construct $\phi^k$, note that under Assumption \ref{ass:init}, $\mu^0$ lies in the column space of $G_o^T$, and by the definition of $L_o=\frac{1}{2}G_o^T G_o$, every $\mu^{k+1}$ in the dual update \eqref{cola-mu} also lies in the column space of $G_o^T$. Thus, there exists a vector $\phi^k\in\mathcal{R}^{mp}$ satisfying $\mu^k = G_o^T \phi^k$ for any $k \geq 0$, such that the recursion of COLA can be rewritten as
\begin{align}
x^{k+1} & = x^k - (2cD+\rho I)^{-1} \left(\nabla f(x^k)  + cL_o\hat{x}^{k} + G_o^T\phi^k \right),\label{update-1}\\
\phi^{k+1} & = \phi^k + \frac{c}{2}G_o\hat{x}^{k+1}.\label{update-2}
\end{align}

Combining \eqref{update-1} and \eqref{update-2} with the KKT conditions \eqref{kkt-1}--\eqref{kkt-3}, the next lemma gives two equations that are cornerstones of the theoretical analysis. To emphasize the error caused by the communication-censoring strategy, we define an error term $E^k := x^k - \hat{x}^k$ therein.

\begin{lemma}\label{lemma2}
	Let $x^*$ and $\phi^*$ be a primal-dual optimal pair of \eqref{eq:obj-ADM}, with $\phi^*$ lying in the column space of $G_o$.
	Then, for all $k \geq 0$, the recursion of COLA satisfies
	\begin{align}
	\nabla f(x^{k}) - \nabla f(x^{*}) =& (cL_u+\rho I)(x^k - x^{k+1})\label{lemma2-1}\\
	& \hspace{-5.5em} - G_o^T(\phi^{k+1}-\phi^*) + cL_o (E^k-E^{k+1}), \notag\\
	\frac{c}{2}G_o(x^{k+1} - x^*) =& \phi^{k+1} -\phi^k+\frac{c}{2}G_oE^{k+1}.\label{lemma2-2}
	\end{align}
\end{lemma}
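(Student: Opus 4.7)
The plan is to verify the two identities by direct algebraic manipulation of the COLA recursion \eqref{update-1}--\eqref{update-2}, together with the KKT conditions \eqref{kkt-1}--\eqref{kkt-3} supplied by Lemma \ref{lemma1}. The second identity will be easy; the first will require a bit of rewriting to trade $\phi^k$ for $\phi^{k+1}$ and to split $2cD + \rho I$ into $cL_o + cL_u + \rho I$ using $D = \tfrac{1}{2}(L_o+L_u)$.

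First I would handle \eqref{lemma2-2}. Starting from the dual update \eqref{update-2} and substituting $\hat{x}^{k+1} = x^{k+1} - E^{k+1}$, I obtain
\begin{align*}
\phi^{k+1} - \phi^k = \frac{c}{2}G_o x^{k+1} - \frac{c}{2}G_o E^{k+1}.
\end{align*}
The KKT condition \eqref{kkt-2} gives $G_o x^* = 0$, so I may subtract $\frac{c}{2}G_o x^*$ from the right side without changing it, yielding exactly \eqref{lemma2-2}.

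Next I would attack \eqref{lemma2-1}. Multiplying \eqref{update-1} through by $(2cD+\rho I)$ and moving terms around gives
\begin{align*}
\nabla f(x^k) = (2cD+\rho I)(x^k - x^{k+1}) - cL_o \hat{x}^k - G_o^T \phi^k.
\end{align*}
I then subtract $\nabla f(x^*) = -G_o^T \phi^*$ (from \eqref{kkt-1}) and split $2cD + \rho I = cL_o + cL_u + \rho I$:
\begin{align*}
\nabla f(x^k) - \nabla f(x^*) = (cL_u + \rho I)(x^k - x^{k+1}) + cL_o(x^k - x^{k+1}) - cL_o \hat{x}^k - G_o^T(\phi^k - \phi^*).
\end{align*}
To convert the $\phi^k$ on the right into $\phi^{k+1}$, I use the dual update once more together with $G_o^T G_o = 2 L_o$, which gives $G_o^T(\phi^{k+1} - \phi^k) = cL_o \hat{x}^{k+1}$, hence
\begin{align*}
-G_o^T(\phi^k - \phi^*) = -G_o^T(\phi^{k+1} - \phi^*) + cL_o \hat{x}^{k+1}.
\end{align*}
Substituting this in, the $cL_o$-terms collect into $cL_o\bigl[(x^k - x^{k+1}) - \hat{x}^k + \hat{x}^{k+1}\bigr]$, and applying $E^k = x^k - \hat{x}^k$ turns the bracket into $E^k - E^{k+1}$, which delivers \eqref{lemma2-1}.

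The only subtlety I anticipate is the sign- and index-tracking in the last substitution: one must be careful not to confuse $\hat{x}^k$ with $\hat{x}^{k+1}$ when invoking the dual update, and one must remember that it is $\phi^{k+1} - \phi^k$, not $\phi^k - \phi^{k+1}$, that equals $\tfrac{c}{2}G_o \hat{x}^{k+1}$. Beyond that, the argument is a short direct verification and no inequality or limit is invoked, so the lemma should fall out cleanly.
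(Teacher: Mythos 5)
Your proof is correct and follows essentially the same route as the paper's: both derive \eqref{lemma2-2} by substituting $\hat{x}^{k+1}=x^{k+1}-E^{k+1}$ into the dual update and invoking $G_o x^*=0$, and both derive \eqref{lemma2-1} by rearranging \eqref{update-1}, subtracting \eqref{kkt-1}, splitting $2cD=cL_o+cL_u$, and trading $\phi^k$ for $\phi^{k+1}$ via $G_o^T(\phi^{k+1}-\phi^k)=cL_o\hat{x}^{k+1}$. The only cosmetic difference is that the paper routes this last substitution through its intermediate identity \eqref{lem2-0} (so the $L_o x^*$ term is killed at the end), whereas you apply the raw dual update directly; the algebra is identical.
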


\begin{proof}
	See Appendix \ref{app_lemma2}.
\end{proof}

The convergence analysis of COLA relies on the following energy function
\begin{equation}\label{eq:energy}
V^k := \frac{\rho}{2}\|x^k-x^*\|^2 + c\|z^k-z^*\|^2 +\frac{1}{c}\|\phi^k-\phi^*\|^2,
\end{equation}
where the auxiliary variables $z^k$ and $\phi^k$ as well as their optimal values $z^*$ and $\phi^*$ are defined above. This energy function also appears in the analysis of DLM, the uncensored version of COLA \cite{Ling2015-DLM}. However, due to the existence of the communication-censoring strategy which introduces an error term in the recursion, the analysis of COLA is significantly different to that of DLM.


\subsection{Convergence}
\label{subsec:conv}

The convergence of COLA is established as follows.

\begin{theorem} \label{theorem:convergence}
	Under Assumptions \ref{ass:conn}--\ref{ass:init}, in COLA we choose the penalty parameter $c>0$ and the linearization parameter $\rho>0$ such that $c\lambda_{\min}(L_u)+\rho > \frac{M}{2}$, and set the censoring threshold $\{\tau^{k}\}$ as a non-increasing non-negative summable sequence	such that $\sum_{k=0}^{\infty} \tau^k < \infty$. Then the primal variable $x^k$	converges to an	optimal solution $x^*$ of (\ref{eq:obj-ADM}).
\end{theorem}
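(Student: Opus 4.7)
The plan is to prove convergence through a Lyapunov-type descent inequality for the energy function $V^k$ defined in \eqref{eq:energy}, with $(x^*,z^*,\phi^*)$ chosen as in Lemma \ref{lemma1}. A preliminary observation is that the censoring rule forces $\|E_i^k\|\leq\tau^k$ at every node $i$ and every $k\geq 1$: either node $i$ transmits at time $k$ and $E_i^k=0$, or it does not and the test $\xi_i^k<\tau^k$ is precisely the bound. Hence $\|E^k\|\leq \sqrt{n}\,\tau^k$, so by hypothesis $\sum_k\|E^k\|<\infty$ and $E^k\to 0$.

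Next I would expand $V^k-V^{k+1}$ using $\|a\|^2-\|b\|^2=2\langle b,a-b\rangle+\|a-b\|^2$ on each of the three summands. Because $z^k=\tfrac12 G_u x^k$, the $z$-term becomes a quadratic form in $L_u$; because $\phi^{k+1}-\phi^k=\tfrac{c}{2}G_o(x^{k+1}-E^{k+1})$ from \eqref{update-2}, the $\phi$-term can be rewritten in primal quantities plus an error. The nontrivial cross term $\tfrac{2}{c}\langle\phi^{k+1}-\phi^*,\phi^k-\phi^{k+1}\rangle$ is handled by substituting $G_o^T(\phi^{k+1}-\phi^*)$ from \eqref{lemma2-1}, then combining the convexity inequality $\langle\nabla f(x^{k+1})-\nabla f(x^*),x^{k+1}-x^*\rangle\geq 0$ with the Lipschitz bound $\|\nabla f(x^k)-\nabla f(x^{k+1})\|\leq M\|x^k-x^{k+1}\|$ to dominate the leftover gradient discrepancy. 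The assumption $c\lambda_{\min}(L_u)+\rho>M/2$ is exactly what makes the resulting quadratic form in $x^k-x^{k+1}$ strictly positive in the uncensored limit, so the target is an inequality of the shape
\begin{equation*}
V^{k+1}\;\leq\; V^k\;-\;\eta\,\|x^{k+1}-x^k\|^2\;+\;P_k,
\end{equation*}
where $\eta>0$ and $P_k$ collects every term containing $E^k$ or $E^{k+1}$, estimated by Young's inequality with weights chosen to absorb a controlled share of the $\|x^{k+1}-x^k\|^2$ margin without erasing $\eta$.

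The main obstacle is that several cross terms have the form $\|E^k\|\cdot\|x^{k+1}-x^*\|$ or $\|E^k\|\cdot\|\phi^{k+1}-\phi^*\|$, and these norms are only bounded once $V^k$ itself is controlled. To break the circularity I would first apply Young's inequality loosely to obtain $V^{k+1}\leq(1+\gamma_k)V^k+\delta_k$ with $\{\gamma_k\},\{\delta_k\}$ summable; a Gronwall-style iteration then gives boundedness of $\{V^k\}$, and therefore of $\{x^k\}$ and $\{\phi^k\}$. A second pass with sharp constants yields the descent bound $V^{k+1}\leq V^k-\eta\|x^{k+1}-x^k\|^2+d_k$ with $\sum_k d_k<\infty$. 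A Robbins--Siegmund-type lemma for non-negative sequences then delivers convergence of $\{V^k\}$ and $\sum_k\|x^{k+1}-x^k\|^2<\infty$, hence $x^{k+1}-x^k\to 0$. By boundedness I can extract a subsequence $x^{k_j}\to\bar x$ together with a compatible $\phi^{k_j}\to\bar\phi$; passing to the limit in \eqref{update-1}--\eqref{update-2}, with $E^k\to 0$ and $x^{k+1}-x^k\to 0$, shows that $(\bar x,\bar\phi)$ satisfies the KKT conditions \eqref{kkt-1}--\eqref{kkt-2}, so $\bar x$ is optimal. Finally, re-running the descent inequality with the fixed reference point $(x^*,\phi^*)=(\bar x,\bar\phi)$, the corresponding energy $\bar V^k$ converges and has a subsequence tending to zero along $\{k_j\}$, which forces $\bar V^k\to 0$ and thus $x^k\to\bar x$ on the whole sequence.
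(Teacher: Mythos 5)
Your overall architecture matches the paper's: the censoring bound $\|E^k\|\leq\sqrt{n}\,\tau^k$, a perturbed descent inequality for $V^k$, a two-pass Gronwall argument (first boundedness of $V^k$, then summability of $\|x^{k+1}-x^k\|^2$ and $\|\phi^{k+1}-\phi^k\|^2$), passage to the limit in the KKT system, and re-centering the energy at a cluster point to upgrade subsequential to full convergence. However, there is a genuine gap at the single most delicate step. To control the gradient term you propose to lower-bound $\langle\nabla f(x^{k})-\nabla f(x^*),x^{k+1}-x^*\rangle$ by writing it as $\langle\nabla f(x^{k+1})-\nabla f(x^*),x^{k+1}-x^*\rangle+\langle\nabla f(x^{k})-\nabla f(x^{k+1}),x^{k+1}-x^*\rangle$, using convexity on the first piece and the Lipschitz bound on the second. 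The second piece then costs you $M\|x^{k}-x^{k+1}\|\cdot\|x^{k+1}-x^*\|$, and this product cannot be absorbed under mere convexity: Young's inequality converts it into $\tfrac{M}{2\epsilon}\|x^k-x^{k+1}\|^2+\tfrac{M\epsilon}{2}\|x^{k+1}-x^*\|^2$, and the latter term has no summable prefactor (unlike the censoring errors, which all carry a $\tau^k$), so it forces a \emph{constant} expansion factor $(1-\mathrm{const}\cdot\epsilon)^{-1}>1$ in your recursion $V^{k+1}\leq(1+\gamma_k)V^k+\delta_k$, destroying both the summability of $\gamma_k$ and the $\eta\|x^{k+1}-x^k\|^2$ margin simultaneously. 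This decomposition is exactly the one the paper uses for Theorem \ref{theorem:linear}, where strong convexity supplies a $-m\|x^{k+1}-x^*\|^2$ term to absorb the leftover; under Assumptions \ref{ass:conn}--\ref{ass:init} alone it does not close.

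The fix is the paper's co-coercivity argument: start from $\tfrac{1}{M}\|\nabla f(x^{k})-\nabla f(x^*)\|^2\leq\langle\nabla f(x^{k})-\nabla f(x^*),x^{k}-x^*\rangle$, split $x^k-x^*=(x^{k+1}-x^*)+(x^k-x^{k+1})$, and bound the shift term by $\tfrac{1}{M}\|\nabla f(x^{k})-\nabla f(x^*)\|^2+\tfrac{M}{4}\|x^k-x^{k+1}\|^2$ so that the squared-gradient terms cancel. This yields $\langle\nabla f(x^{k})-\nabla f(x^*),x^{k+1}-x^*\rangle\geq-\tfrac{M}{4}\|x^k-x^{k+1}\|^2$ with no dependence on $\|x^{k+1}-x^*\|$, and the residual $\tfrac{M}{4}\|x^k-x^{k+1}\|^2$ is precisely what the hypothesis $c\lambda_{\min}(L_u)+\rho>\tfrac{M}{2}$ is calibrated to absorb. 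With that replacement the rest of your plan goes through essentially as in the paper.
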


\begin{proof}
	See Appendix \ref{app_convergence}.
\end{proof}

Theorem \ref{theorem:convergence} asserts that COLA converges to an optimal solution of \eqref{eq:obj} under mild conditions and provides guidelines for setting parameters. It is interesting to see that the requirement $c\lambda_{\min}(L_u)+\rho > \frac{M}{2}$ is the same as that in DLM \cite{Ling2015-DLM}. Fixing $\rho$, a network with better connectedness (namely, larger $\lambda_{\min}(L_u)$) allows us to choose a smaller penalty constant $c$. Fixing $c$ and $\lambda_{\min}(L_u)$, the linearization parameter $\rho$ must be large enough to guarantee convergence. Note that $\rho I_p$ approximates the Hessians of the local cost functions $f_i(x_i)$. A large $\rho$ over-approximates the curvature and forces $x_i^{k+1}$ to be close to $x_i^k$, which stabilizes the recursion. On the contrary, a small $\rho$ under-approximates the curvature and allows the local variables to change quickly, at the cost of possible divergence. Fig. \ref{fig:rho} illustrates the impact of $\rho$. Regarding the censoring threshold $\tau^k$, we require it to be summable. Intuitively, $\tau^k$ determines the maximal error introduced to the primal update. When this error is controllable, the convergence of COLA is guaranteed

\begin{figure}
	\begin{center}
		\includegraphics[height=48mm]{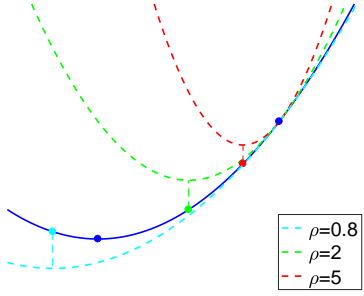}
		\caption{An illustration of choosing different approximation parameters $\rho$. In this situation, at the top-right point, we approximate the original cost function (in blue) by the dashed lines (in cyan, green and red). When $\rho=2$ and $\rho=5$ that are both larger than the accurate second derivative, the updates are conservative and go to the green and red points, respectively. When $\rho=0.8$ that is smaller than the accurate second derivative, the update is aggressive and goes to the cyan point. \label{fig:rho}}
	\end{center}
\end{figure}

\subsection{Rates of Convergence}
\label{subsec:linconv}

In Section \ref{subsec:conv}, we have shown that COLA requires $\{\tau^k\}$ to be summable so as to guarantee convergence. Below, we shall prove that convergence rate of COLA also depends on convergence rate of $\{\tau^k\}$. In addition to Assumptions \ref{ass:conn}--\ref{ass:init}, we need the local cost functions to be strongly convex, as stated in Assumption \ref{ass:strong}. In this circumstance, COLA converges to the unique optimal solution of \eqref{eq:obj} at a linear (sublinear) rate when $\{\tau^k\}$ is linearly (sublinearly) decaying.

\begin{theorem} \label{theorem:linear}
	Under Assumptions \ref{ass:conn}--\ref{ass:strong}, in COLA we choose the penalty parameter $c>0$ and the linearization parameter $\rho>\frac{M^2}{2m}$, and set the censoring threshold $\tau^{k} = \alpha\cdot(\beta)^k$ with $\alpha>0$ and $\beta\in (0,1)$. Then there exists a positive constant $\delta>0$ such that the primal variable $x^k$ converges to the unique optimal solution $x^*$ of (\ref{eq:obj-ADM}) at a global linear rate $\mathcal{O}((1+\delta)^{-\frac{k}{2}})$.
\end{theorem}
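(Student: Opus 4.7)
The plan is to establish a one-step contraction of the form $(1+\delta)V^{k+1} \leq V^k + C(\tau^{k+1})^2$ for the energy function $V^k$ defined in (\ref{eq:energy}), and then unroll this recursion against the geometrically decaying threshold $\tau^k = \alpha\beta^k$ to extract a linear rate. The censoring error $E^k := x^k - \hat{x}^k$ is the key new quantity relative to the DLM analysis, and the saving grace is that the censoring rule $\xi_i^k \geq \tau^k$ forces the entrywise bound $\|e_i^k\| \leq \tau^k$, so $\|E^k\|^2 \leq n(\tau^k)^2$.

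First I would take the inner product of (\ref{lemma2-1}) with $2(x^{k+1} - x^*)$. The gradient-difference term $\langle \nabla f(x^k) - \nabla f(x^*), x^{k+1} - x^*\rangle$ is split into a strongly convex piece $\langle \nabla f(x^k) - \nabla f(x^*), x^k - x^*\rangle \geq m\|x^k-x^*\|^2$ (using Assumption \ref{ass:strong}) plus a cross term $\langle \nabla f(x^k) - \nabla f(x^*), x^{k+1} - x^k\rangle$ which is controlled via Lipschitz continuity (Assumption \ref{ass:Lip}) and Young's inequality $2|\langle a,b\rangle| \leq \eta\|a\|^2 + \eta^{-1}\|b\|^2$. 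It is precisely here that the hypothesis $\rho > M^2/(2m)$ enters: with a suitable choice of $\eta$, the resulting $M^2/(2m\eta) \cdot \|x^k-x^*\|^2$ penalty gets absorbed into the strong-convexity gain, leaving a strictly positive coefficient in front of $\|x^k-x^*\|^2$ that we can push into the $\frac{\rho}{2}\|x^k-x^*\|^2$ slot of $V^k$.

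Next I would couple (\ref{lemma2-1}) with the dual identity (\ref{lemma2-2}). The term $-\langle G_o^T(\phi^{k+1}-\phi^*), 2(x^{k+1}-x^*)\rangle = -\langle \phi^{k+1}-\phi^*, 2 G_o(x^{k+1}-x^*)\rangle$ is rewritten, via (\ref{lemma2-2}), as $-\frac{4}{c}\langle \phi^{k+1}-\phi^*, \phi^{k+1}-\phi^k\rangle + \text{terms in } G_o E^{k+1}$. The first piece telescopes into $\frac{2}{c}\|\phi^k-\phi^*\|^2 - \frac{2}{c}\|\phi^{k+1}-\phi^*\|^2 - \frac{2}{c}\|\phi^{k+1}-\phi^k\|^2$. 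The $(cL_u+\rho I)$-weighted primal term similarly splits into a telescoping $\frac{\rho}{2}(\|x^k-x^*\|^2 - \|x^{k+1}-x^*\|^2)$ together with a difference of $z$-norms using $z^k = \frac{1}{2}G_u x^k$ and $L_u = \frac{1}{2}G_u^T G_u$. All remaining cross terms involving $E^k$ and $E^{k+1}$ are bounded by Young's inequality in the form $\kappa_1\|E^k\|^2 + \kappa_2\|E^{k+1}\|^2$ plus a small fraction of $\|x^{k+1}-x^*\|^2$, $\|z^{k+1}-z^*\|^2$, and $\|\phi^{k+1}-\phi^*\|^2$ that can be folded back into $V^{k+1}$.

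Collecting everything, I expect to obtain
\begin{equation*}
V^k - V^{k+1} \geq \delta\, V^{k+1} - C\bigl(\|E^k\|^2 + \|E^{k+1}\|^2\bigr)
\end{equation*}
for a sufficiently small but positive $\delta$, depending on $m$, $M$, $c$, $\rho$, $\sigma_{\max}(L_o)$, $\lambda_{\min}(L_u)$ and $\tilde{\sigma}_{\min}(G_o)$. Substituting $\|E^k\|^2 \leq n(\tau^k)^2$ and $\tau^k = \alpha\beta^k$ gives the clean recursion $(1+\delta)V^{k+1} \leq V^k + C'\beta^{2k}$. If necessary, I shrink $\delta$ so that $(1+\delta)\beta^2 < 1$, which keeps $\delta$ strictly positive; then iterating the recursion and summing the resulting geometric series yields $V^k = \mathcal{O}((1+\delta)^{-k})$, whence $\|x^k - x^*\| = \mathcal{O}((1+\delta)^{-k/2})$. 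I expect the main obstacle to be the bookkeeping of the $E^k$-cross terms, because $E^{k+1}$ itself appears inside $G_o$-coupled inner products with $\phi^{k+1}-\phi^*$ coming from (\ref{lemma2-2}); one has to pick the Young parameters carefully so that no irreducible $\|\phi^{k+1}-\phi^*\|^2$ or $\|x^{k+1}-x^*\|^2$ residual spoils the uniform contraction factor $1+\delta$.
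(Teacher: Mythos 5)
Your overall architecture is the same as the paper's: derive a one-step inequality $(1+\delta)V^{k+1}\le V^k+\gamma n(\tau^k)^2$ from the identities of Lemma \ref{lemma2}, use the censoring rule to get $\|E^k\|\le\sqrt{n}\,\tau^k$, invoke strong convexity together with a Lipschitz--Young split (which is exactly where $\rho>\frac{M^2}{2m}$ is consumed), and unroll against the geometric threshold under $(1+\delta)\beta^2<1$. The one stylistic deviation is that you anchor strong convexity at $(x^k,x^*)$, whereas the paper anchors it at $(x^{k+1},x^*)$ so that the gain $m\|x^{k+1}-x^*\|^2$ lands directly on the quantity appearing in $V^{k+1}$; your variant forces an extra triangle-inequality conversion from $\|x^k-x^*\|^2$ to $\|x^{k+1}-x^*\|^2$ but is not fatal.

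The genuine gap is in how you plan to dispose of the leftover $\|\phi^{k+1}-\phi^*\|^2$. You say it can be ``folded back into $V^{k+1}$'' and that careful Young parameters will prevent an irreducible residual. Neither works: moving a positive $\epsilon\|\phi^{k+1}-\phi^*\|^2$ to the left only yields $(1-\epsilon c)V^{k+1}\le\cdots$, an expansion rather than a contraction, and no choice of Young parameters eliminates it because the descent inequality contains no negative multiple of $\|\phi^{k+1}-\phi^*\|^2$ to absorb it --- the dual variable has no strong-convexity margin of its own. The required idea, which your mention of $\tilde{\sigma}_{\min}(G_o)$ hints at but never executes, is to rearrange \eqref{lemma2-1} into $G_o^T(\phi^{k+1}-\phi^*)=-(\nabla f(x^k)-\nabla f(x^*))+cG_u^T(z^k-z^{k+1})+\rho(x^k-x^{k+1})+cL_o(E^k-E^{k+1})$ and use that $\phi^{k+1}$ and $\phi^*$ both lie in the column space of $G_o$ (Assumption \ref{ass:init} and Lemma \ref{lemma1}), so that $\tilde{\sigma}^2_{\min}(G_o)\|\phi^{k+1}-\phi^*\|^2\le\|G_o^T(\phi^{k+1}-\phi^*)\|^2$. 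This converts every occurrence of $\|\phi^{k+1}-\phi^*\|^2$ --- both the $\frac{\delta}{c}$ share demanded by the contraction and the pieces generated by the $E$-cross-terms --- into $\|x^{k+1}-x^*\|^2$, $\|x^k-x^{k+1}\|^2$, $\|z^k-z^{k+1}\|^2$ and $(\tau^k)^2$, which the surplus terms and the strong-convexity margin can then absorb. Without this step the contraction does not close.
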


\begin{proof}
	See Appendix \ref{app_linear}.
\end{proof}

As shown in \eqref{eq:delta-bound} in the proof of Theorem \ref{theorem:linear}, the constant $\delta$ depends on the algorithm parameters $c$, $\rho$ and $\beta$, the network topology parameterized by $\tilde{\sigma}_{\min}(G_o)$ and $\sigma_{\max}(G_u)$, and the properties of the local cost functions parameterized by $M$ and $m$. Define the condition numbers of cost functions and graph as $\kappa_f=\frac Mm$ and $\kappa_G=\frac{\sigma_{\max}(G_u)}{\tilde{\sigma}_{\min}(G_o)}$, respectively. The following corollary shows clearer that, by properly setting $c$ and $\rho$, how the constant $\delta$ is determined by $\kappa_f$, $\kappa_G$ and $\beta$.

\begin{corollary}\label{cor:delta}
Under Assumptions \ref{ass:conn}--\ref{ass:strong}, in COLA we choose $c=\frac{8M}{\sigma_{\max}(G_u)\tilde{\sigma}_{\min}(G_o)}$ and $\rho=M \kappa_f$. Then the global linear rate $\mathcal{O}((1+\delta)^{-\frac{k}{2}})$ satisfies
\begin{align}\label{eq:cor}
\delta\le\min\left\{\frac1{8\kappa_G^2},\frac1{2\kappa_f^2+16\kappa_f \kappa_G},\frac{\kappa_f}{12\kappa_G+6\kappa_f^2\kappa_G},\frac1{\beta^2}-1\right\}.
\end{align}
\end{corollary}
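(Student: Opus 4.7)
The plan is to specialize the bound on $\delta$ established in the proof of Theorem \ref{theorem:linear} (equation \eqref{eq:delta-bound}) to the specific choices $c=\tfrac{8M}{\sigma_{\max}(G_u)\tilde{\sigma}_{\min}(G_o)}$ and $\rho = M\kappa_f = M^2/m$, and then rewrite everything in terms of the condition numbers $\kappa_f$ and $\kappa_G$. Before doing that, I would verify that these choices are admissible: since $\rho = M^2/m > M^2/(2m)$, the hypothesis $\rho > M^2/(2m)$ of Theorem \ref{theorem:linear} is satisfied, and $c>0$ clearly holds, so the theorem applies and yields some $\delta>0$ satisfying a collection of upper bounds.

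Next I would inspect each of the upper bounds that constitute \eqref{eq:delta-bound}. Typically such a bound has several constituents: one coming from the primal strong convexity/smoothness margin (involving $\rho$, $m$, $M$), one coming from the spectral gap of the graph (involving $c$, $\tilde{\sigma}_{\min}(G_o)$, $\sigma_{\max}(G_u)$), one mixing the two, and one coming from the censoring-threshold decay rate, which must match the target contraction (yielding the $1/\beta^2 - 1$ term to ensure $\tau^k$ decays at least as fast as $(1+\delta)^{-k/2}$). Step by step, I would substitute $c$ and $\rho$ into each summand, factor out $M$, and express the remaining ratios using $\kappa_f$ and $\kappa_G$. For instance, the purely graph-dependent term should collapse to a constant multiple of $1/\kappa_G^2$, since plugging $c\propto M/(\sigma_{\max}(G_u)\tilde{\sigma}_{\min}(G_o))$ into an expression involving $c\tilde{\sigma}_{\min}^2(G_o)$ and $c\sigma_{\max}^2(G_u)$ leaves exactly such a ratio. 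The cross term will be proportional to $1/(\kappa_f^2 + \kappa_f \kappa_G)$, and a second cross term handles the regime where $\kappa_f$ dominates, producing $\kappa_f/(\kappa_G + \kappa_f^2 \kappa_G)$.

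The final step is to assemble the four resulting bounds into the $\min$ in \eqref{eq:cor}; since $\delta$ is bounded by each individually, it is bounded by their minimum. The main obstacle I anticipate is bookkeeping rather than genuine difficulty: the raw bound in \eqref{eq:delta-bound} presumably contains several numerical constants and several coupled constraints on $\delta$, and I must be careful to pick the tightest simplified form for each so that the numerical factors $1/8$, $1/2$, $1/16$, $1/12$, $1/6$ in \eqref{eq:cor} come out correctly after substitution. In particular, I need to track which constraint produces the denominator $2\kappa_f^2 + 16 \kappa_f \kappa_G$ (likely a single inequality that mixes the smoothness-driven and graph-driven terms) versus which produces $12\kappa_G + 6\kappa_f^2 \kappa_G$, and confirm that no looser intermediate estimate is accidentally used. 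Once those substitutions are carried out, \eqref{eq:cor} follows directly.
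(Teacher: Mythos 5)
Your overall strategy---check admissibility of $c$ and $\rho$, then trace each constraint on $\delta$ back to the analysis of Theorem \ref{theorem:linear}---is sensible, but the specific route you propose has a genuine gap: substituting $c=\frac{8M}{\sigma_{\max}(G_u)\tilde{\sigma}_{\min}(G_o)}$ and $\rho=M\kappa_f$ into \eqref{eq:delta-bound} does \emph{not} produce the constants in \eqref{eq:cor}. The bound \eqref{eq:delta-bound} is not a function of $c$ and $\rho$ alone; it was obtained only after fixing the auxiliary Young-inequality constants $c_1,c_2,c_3$ to the particular (lengthy) values chosen in Step 2 of that proof, and each of its first three entries carries a subtracted term $\frac{c}{2c_1}$ with that fixed $c_1$. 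Plugging your $c$ and $\rho$ into \eqref{eq:delta-bound} therefore gives, e.g., $\frac{1}{4\kappa_G^2}-\frac{c}{2c_1}$ for the first entry, which does not collapse to $\frac{1}{8\kappa_G^2}$; the second entry becomes $\frac{\kappa_f}{8\kappa_G+4\kappa_f^2\kappa_G}-\frac{c}{2c_1}$ rather than the corollary's $\frac{\kappa_f}{12\kappa_G+6\kappa_f^2\kappa_G}$; and the value $c_3=\frac{2m\rho+M^2}{4mM^2}=\frac{3}{4m}$ used there differs from what is needed for \eqref{eq:cor}. So the bookkeeping you anticipate cannot land on the factors $1/8$, $1/16$, $1/12$, $1/6$ no matter how carefully it is carried out on \eqref{eq:delta-bound}.

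The missing idea is a re-optimization of the free constants jointly with $c$ and $\rho$: one must return to the intermediate inequality \eqref{thm2step2}, where $c_1,c_2,c_3$ are still unconstrained, and choose $\frac{1}{2c_1}=\frac{\delta}{c}$, $\frac{1}{2c_2}=\frac{1}{c}-\frac{1}{2c_1}$ and $c_3=\frac{2}{3m}$ (this is exactly what Remark \ref{rmk:cor1} prescribes). With these choices the coefficient of $\|\phi^k-\phi^{k+1}\|^2$ vanishes identically, nonnegativity of the coefficient of $\|z^k-z^{k+1}\|^2$ yields $\delta\le\frac{1}{8\kappa_G^2}$, that of $\|x^{k+1}-x^*\|^2$ yields $\delta\le\frac{1}{2\kappa_f^2+16\kappa_f\kappa_G}$, that of $\|x^k-x^{k+1}\|^2$ yields $\delta\le\frac{\kappa_f}{12\kappa_G+6\kappa_f^2\kappa_G}$, and the geometric-series argument of Step 3 contributes $\delta\le\frac{1}{\beta^2}-1$, which together give \eqref{eq:cor}. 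Your reading of the $\frac{1}{\beta^2}-1$ term and of the qualitative dependence on $\kappa_f$ and $\kappa_G$ is correct, but without the re-parametrization of $c_1,c_2,c_3$ the proof as planned would fail to establish the stated bound.
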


In \eqref{eq:cor}, the terms $\frac1{8\kappa_G^2}$, $\frac1{2\kappa_f^2+16\kappa_f \kappa_G}$ and $\frac{\kappa_f}{12\kappa_G+6\kappa_f^2\kappa_G}$ are monotonically decreasing when either $\kappa_f$ or $\kappa_G$ increases, meaning that the convergence is slower when the cost functions are worse-conditioned and/or the network is less-connected. In addition, $\delta$ is bounded by $\frac1{\beta^2}-1$. Therefore, \eqref{eq:cor} shows that among all $\beta$ that do not affect the convergence rate, the one satisfying $\min\ \{ \frac1{8\kappa_G^2},\frac1{2\kappa_f^2+16\kappa_f \kappa_G},\frac{\kappa_f}{12\kappa_G+6\kappa_f^2\kappa_G} \}$ $= \frac1{\beta^2}-1$ achieves largest communication reduction per iteration.

\begin{theorem} \label{theorem:sublin}
	Under Assumptions \ref{ass:conn}--\ref{ass:strong}, in COLA we choose the penalty parameter $c>0$ and the linearization parameter $\rho>\frac{M^2}{2m}$, and set the censoring threshold $\tau^{k} = \alpha\cdot (k)^{-r}$ with $\alpha>0$ and $r>1$. Then there exists a finite time index $k_0$ such that the distance between the primal variable $x^k$ and the unique optimal solution $x^*$ of (\ref{eq:obj-ADM}) is upper-bounded by a sequence decaying sublinearly to $0$ at a rate of $\mathcal{O}((k)^{-\frac{q}{2}})$, where $q \in (0, 2r-1)$, when $k \geq k_0$.
\end{theorem}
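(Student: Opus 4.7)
The plan is to recycle the contraction-plus-error inequality established in the proof of Theorem \ref{theorem:linear}, which uses only Assumptions \ref{ass:conn}--\ref{ass:strong} and the choice $\rho > \frac{M^2}{2m}$, and not the specific shape of the censoring schedule $\{\tau^k\}$ other than through the pointwise bound $\|E^{k+1}\| \leq \sqrt{n}\,\tau^{k+1}$ on the censoring error. After absorbing cross terms via Young's inequality, this recursion can be written in the clean form
\[
V^{k+1} \leq \frac{1}{1+\delta} V^k + \gamma (\tau^{k+1})^2
\]
for some $\delta > 0$ and $\gamma > 0$ depending on $c$, $\rho$, $M$, $m$, $\tilde{\sigma}_{\min}(G_o)$ and $\sigma_{\max}(G_u)$, but independent of how $\tau^k$ tends to zero. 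Since the sublinear schedule $\tau^{k+1} = \alpha(k+1)^{-r}$ preserves the pointwise bound $\|E^{k+1}\| \leq \sqrt{n}\,\tau^{k+1}$, the same inequality applies verbatim here.

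Next, I would iterate this recursion from some reference index $k_0$ to obtain
\[
V^k \leq (1+\delta)^{-(k-k_0)} V^{k_0} + \gamma \alpha^2 \sum_{j=k_0}^{k-1} (1+\delta)^{-(k-1-j)} (j+1)^{-2r}
\]
and analyze the sum by splitting it at $j = \lceil k/2 \rceil$. On the lower half, the exponential factor $(1+\delta)^{-(k-1-j)}$ is at most $(1+\delta)^{-k/2}$ while $\sum(j+1)^{-2r}$ is summable, so this portion decays exponentially in $k$. On the upper half, $(j+1)^{-2r} \leq \lceil k/2\rceil^{-2r}$ and $\sum(1+\delta)^{-(k-1-j)}$ is bounded by a constant, contributing $\mathcal{O}(k^{-2r})$. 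Combined with the geometrically decaying initial-error term, the overall bound becomes $V^k = \mathcal{O}(k^{-2r}) + \mathcal{O}((1+\delta)^{-k/2})$. For any target rate $q \in (0, 2r-1)$, one can choose a finite $k_0$ such that for all $k \geq k_0$ both exponential contributions are dominated by $k^{-q}$; this is always possible because any geometric sequence decays faster than any polynomial. Since $V^k \geq \frac{\rho}{2}\|x^k - x^*\|^2$ by the definition of the energy function, this yields $\|x^k - x^*\| = \mathcal{O}(k^{-q/2})$ as claimed.

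The main obstacle is the extraction of the clean contraction-plus-error inequality from Lemma \ref{lemma2}. Expanding $V^k - V^{k+1}$ and invoking \eqref{lemma2-1}--\eqref{lemma2-2} produces cross terms of the form $\langle \textrm{residual}, E^{k+1}\rangle$ and $\langle \textrm{residual}, E^{k}\rangle$ stemming from the $cL_o(E^k - E^{k+1})$ and $\frac{c}{2}G_o E^{k+1}$ contributions. Each such cross term must be absorbed by Young's inequality so that the squared-residual part can be combined with the strong-convexity contraction on $V^k$ while the squared-error part is bounded by $n(\tau^k)^2$ or $n(\tau^{k+1})^2$. Choosing the Young parameters consistently with $\rho > M^2/(2m)$ ensures the net contraction modulus $\delta > 0$ is uniform in $k$. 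This bookkeeping is essentially identical to the linear-rate argument, which is why the sublinear extension is primarily a re-examination of the forced sum in the iterated recursion rather than a re-derivation of the inequality from scratch.
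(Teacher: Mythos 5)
Your proof is correct, but it takes a genuinely different route from the paper's. The paper does not iterate a geometric contraction in the sublinear case; instead it constructs a weighted energy $(k)^q\left(V^k+\eta^k\right)$ with a tail-sum correction $(k)^q\eta^k:=\sum_{k'\geq k} tn\alpha^2 (k')^{-(2r-q)}$ and shows that this weighted quantity is non-increasing for $k\geq k_0$. The restriction $q<2r-1$ is exactly the condition for that tail sum to converge, and the argument only needs to extract a per-step contraction of order $\left((k+1)^q-(k)^q\right)/(k)^q\sim q/k$, which is why the authors re-derive the analogue of \eqref{thm2step2} with $k$-dependent coefficients rather than a fixed $\delta$. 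You instead observe---correctly---that the fixed-$\delta$ recursion $(1+\delta)V^{k+1}\leq V^k+\gamma n(\tau^k)^2$ from Step 2 of the proof of Theorem \ref{theorem:linear} is schedule-independent: the constants $c_1,c_2,c_3$ and the first three bounds on $\delta$ in \eqref{eq:delta-bound} depend only on $c,\rho,M,m,\tilde{\sigma}_{\min}(G_o),\sigma_{\max}(G_u)$, while the constraint $\delta\leq\frac{1}{\beta^2}-1$ enters only in Step 3 of that proof (to sum the geometric series) and is not needed to establish the recursion itself. Iterating and splitting the convolution sum at $k/2$ then gives $V^k=\mathcal{O}(k^{-2r})+\mathcal{O}((1+\delta)^{-k/2})$, hence $\|x^k-x^*\|=\mathcal{O}(k^{-r})$, which is strictly sharper than the paper's $\mathcal{O}(k^{-q/2})$ with $q<2r-1$ and in particular implies the stated theorem. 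Your route buys both simplicity (no auxiliary sequence $\eta^k$ to design) and a better exponent; the paper's route would survive in settings where only a $1/k$-order contraction is available, but under the stated strong-convexity assumptions that generality is not needed. Two minor points: the recursion should carry $(\tau^k)^2$ rather than $(\tau^{k+1})^2$, since the paper bounds $\|E^{k+1}\|\leq\sqrt{n}\,\tau^{k+1}\leq\sqrt{n}\,\tau^k$ using monotonicity of the schedule (which $\alpha\cdot(k)^{-r}$ satisfies), and the schedule is undefined at $k=0$, so the iteration should start from $k=1$; neither affects the asymptotics.
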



\begin{proof}
	See Appendix \ref{app_sublin}.
\end{proof}

Theorems \ref{theorem:linear} and \ref{theorem:sublin} indicate that, to achieve linear (sublinear) convergence, we have to impose stronger requirements on the parameters. The sequence of censoring threshold should be not only summable, but also linearly (sublinearly) decaying. The parameters $c$ and $\rho$ should be larger, too. Note that because $M \geq m$, $\rho>\frac{M^2}{2m} \geq \frac{M}{2}$ and consequently $c\lambda_{\min}(L_u)+\rho > \frac{M}{2}$, which is required in Theorem \ref{theorem:convergence}.


According to the upper bound of $\delta$ given in \eqref{eq:delta-bound}, the linear rate of $x^k$ reaching $x^*$ (namely, $\mathcal{O}((1+\delta)^{-k/2})$) must be slower than the linear rate of $\tau^k$ decaying to $0$ (namely, $\mathcal{O}({\beta}^k)$). From Theorem \ref{theorem:sublin}, one can also see that the sublinear rate of $x^k$ reaching $x^*$ (namely, $\mathcal{O}((k)^{-\frac{q}{2}})$ where $q \in (0, 2r-1)$) must be slower than the sublinear rate of $\tau^k$ decaying to $0$ (namely, $\mathcal{O}((k)^{-r})$). Therefore, in both the linear and the sublinear cases, the sequence of censoring threshold $\tau^k$ bounds the convergence rate of $x^k$ to $x^*$. This makes sense because $\tau^k$ means the maximal error allowed to enter the recursion of $x^k$ due to communication censoring.


\begin{remark}
	Though COLA is devised from DLM, the error caused by the communication-censoring strategy makes its analysis different to that of DLM. The analysis of COLA is also different to that of COCA, the censored version of ADMM. The reason is that COLA updates $x^k$ by gradient descent steps, while COCA updates $x^k$ by solving optimization subproblems. This is analogous to the difference in the proofs of DLM and ADMM. In addition to the difference in the proof techniques, we also establish the sublinear convergence of COLA, which is absent in the analysis of DLM and COCA.
\end{remark}

\begin{remark}
	When the censoring threshold $\tau^k$ is set to $0$, COLA degenerates to DLM. Intuitively, the convergence rate of COLA is no faster than that of DLM due to the introduction of the communication-censoring strategy. This is also observed from, for example, the linear convergence constant $\delta$ in Corollary \ref{cor:delta}. Nevertheless, the slower convergence in terms of the number of iterations is acceptable, since COLA effectively reduces the iteration-wise communication cost. We shall demonstrate with numerical experiments that COLA can reduce the overall communication cost comparing to DLM.
\end{remark}


\section{Numerical Experiments}
\label{sec:nume}

This section provides numerical experiments to demonstrate the satisfactory communication-computation tradeoff of COLA. In particular, we shall show that COLA inherits the advantage of cheap computation from its uncensored counterpart DLM \cite{Ling2015-DLM, Chang2015}, but significantly reduces the overall communication cost. Beyond DLM, we compare COLA with the classical ADMM \cite{Shi2014-ADMM} and its censored version COCA \cite{COCA}, both of which do not use the linearization technique and are not computation-efficient. We also compare with the event-triggered sub-gradient descent (ETSD) algorithm \cite{Lu2017}, which is a primal domain first-order method but much slower than COLA in terms of convergence speed. We consider two decentralized consensus optimization problems, least squares in \ref{subsec:ls} and logistic regression in \ref{subsec:lr}. The cost functions are both smooth, but the latter is not strongly convex. For ADMM and COCA, subproblems in least squares have explicit solutions, while those in logistic regression needs computationally demanding inner loops. We use the accuracy of the primal variable as the performance metric, defined by $\|x^k-x^*\|^2/\|x^0-x^*\|^2$. Logistic regression may have multiple optimal solutions, among which we choose the one closest to the limit of iterate as $x^*$. The computation cost is evaluated by time spent to reach a target accuracy, and the communication cost is defined as the accumulated number of broadcast messages. The simulations are carried out on a laptop with an Intel I7 processor and 8GB memory, programmed with Matlab R2017a in macOS Sierra.

\subsection{Decentralized Least Squares}
\label{subsec:ls}

The local cost function in the decentralized least squares problem is $f_i(\tilde{x}) = \frac{1}{2} \|A_{(i)}\tilde{x}-y_{(i)}\|^2_2$, with $A_{(i)}\in \mathcal{R}^{p \times p}$ and $y_{(i)}\in \mathcal{R}^p$ being private for node $i$. Thus, the primal update of node $i$ at time $k$ in COLA is
\begin{align*}
x_{i}^{k+1}  =& x_{i}^{k} - (2cd_{ii}+\rho)^{-1}  \bigg[A_{(i)}^T(A_{(i)}x_{i}^{k}-y_{(i)})\\
& \hspace{8.5em} + c\sum\limits_{j \in \mathcal{N}_i} (\hat{x}_{i}^{k} - \hat{x}_{j}^{k}) + \mu_i^k \bigg].
\end{align*}
Note that node $i$ can compute $(2cd_{ii}+\rho)^{-1}$ in advance to accelerate the computation. In the experiments, entries of $A_{(i)}$ and $b_{(i)}$ are independently and identically sampled from the uniform distribution within $[0,1]$. Then we let $y_{(i)}=A_{(i)}b_{(i)}$. We set the network size as $n=50$ and the dimension of the local variables as $p=3$.

First, we compare four algorithms, COLA, DLM, COCA and ADMM, over four network topologies: line, random, star and complete, as shown in Figs. \ref{fig:ls-line}--\ref{fig:ls-c}. In the random network, $10\%$ of all possible bidirectional edges are randomly chosen to be connected. The accuracies are compared with respect to the number of iterations and the cumulative communication cost. The parameters $c$ and $\rho$ are tuned to be the best for the uncensored algorithms DLM and ADMM, and kept the same in their censored counterparts, respectively. We use the linear censoring threshold in the form of $\tau^k = \alpha\cdot(\beta)^k$, where the parameters $\alpha$ and $\beta$ are hand-tuned in COLA and COCA so as to achieve the best communication efficiency. Taking the random network as an example, we choose $c=0.45$, $\rho=1.1$ in DLM and $\alpha=0.7$, $\beta=0.94$ in COLA, while $c=0.35$ in ADMM and $\alpha=0.9$, $\beta=0.92$ in COCA.

In all the networks, the two censored methods COLA and COCA require more iterations to reach the target accuracy than their uncensored counterparts due to the error caused by censoring, but the saving in communication is remarkable. Compared to DLM and given a target accuracy of $10^{-8}$, COLA saves $\sim1/2$ communication costs in the line and random networks, and $\sim 1/3$ in the star and complete networks. The required number of iterations in the line network is much more than those in the other networks, since the connectedness of the line network is the worst. In better connected networks such as star and complete, variable updating is often informative, such that the deterioration of convergence speed caused by skipping transmissions becomes more noticeable, yet communication per iteration is still saved by censoring.

\begin{figure}
	\begin{center}
		\includegraphics[height=48mm,trim=0 20 0 20]{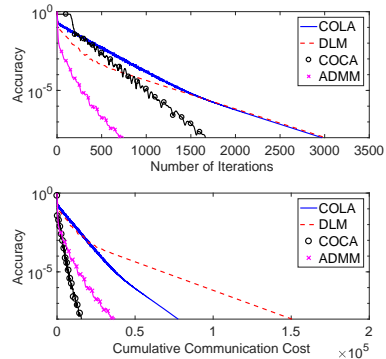}
		\caption{Performance over line network for decentralized least squares. \label{fig:ls-line}}
	\end{center}
\end{figure}

\begin{figure}
	\begin{center}
		\includegraphics[height=48mm,trim=0 20 0 20]{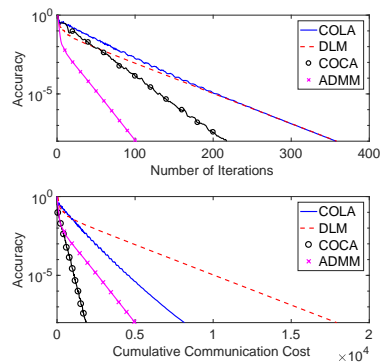}
		\caption{Performance over random network for decentralized least squares.\label{fig:ls-r}}
	\end{center}
\end{figure}

\begin{figure}
	\begin{center}
		\includegraphics[height=48mm,trim=0 20 0 20]{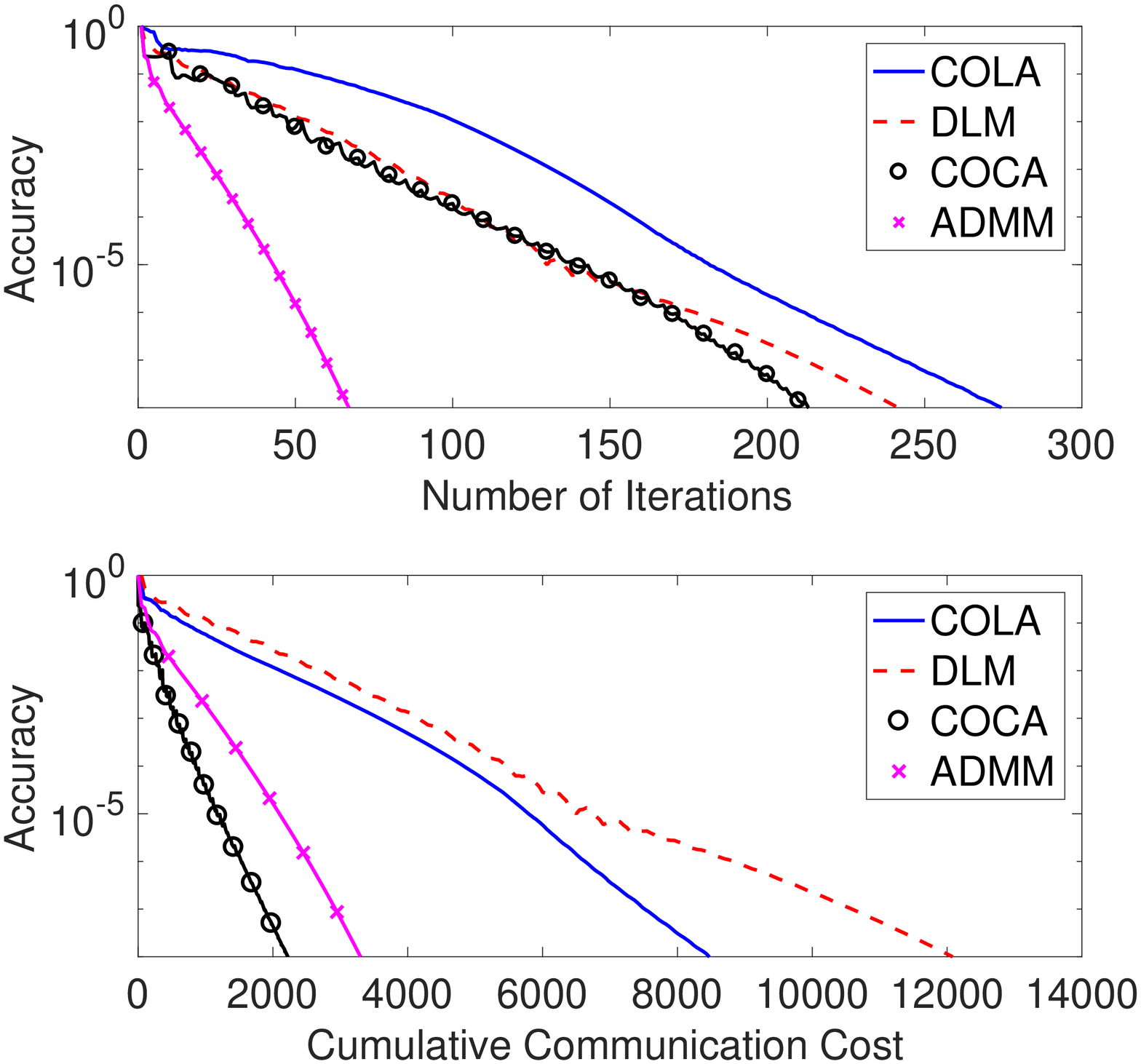}
		\caption{Performance over star network for decentralized least squares.\label{fig:ls-star}}
	\end{center}
\end{figure}

\begin{figure}
	\begin{center}
		\includegraphics[height=48mm,trim=0 20 0 20]{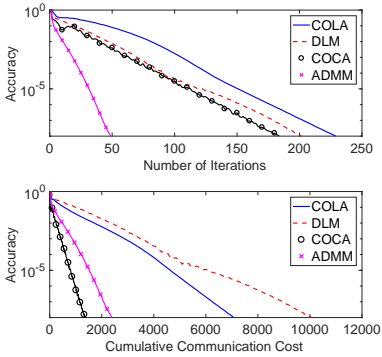}
		\caption{Performance over complete network for decentralized least squares.\label{fig:ls-c}}
	\end{center}
\end{figure}

We study the influence of communication censoring over the random network. The censoring pattern of the first $200$ iterations is shown in Fig. \ref{fig:ls-t1}. The horizontal axis is the number of iterations, and the vertical axis is the node index. A white dot means that the node broadcasts at the time, while a black dot means that the node is silent. Observe that communication censoring happens uniformly, namely, the frequency of communication censoring does not change too much along the optimization process. In addition,  the nodes have similar communication costs eventually. On average, every node broadcasts $0.35 \sim 0.45$ message per time.

Next, we compare the choice of the censoring threshold in COLA over the random network. We compare four censoring thresholds, the linear sequences $\tau^k = \alpha\cdot(\beta)^k$ with $\alpha=0.7$ while $\beta= 0.93$, $0.95$ and $0.97$, as well as the sublinear sequence $\tau^k = \alpha\cdot(k)^{-r}$ with $\alpha=1000$ and $r=2.5$. The parameters $c$ and $\rho$ remain the same. As shown in Fig. \ref{fig:ls-beta}, the linear censoring thresholds outperforms the sublinear censoring threshold, in terms of both communication and computation. The reason is that the sublinear rate of the threshold limits the convergence rate of COLA, as we have theoretically analyzed in Section \ref{sec:conv}. Regarding the
different choices of the linear rate, we observe that a smaller $\beta$ needs less number of iterations to reach a target accuracy, since it leads to faster decay of the censoring threshold, and thus less communication censoring per iteration. In contrast, with a larger $\beta$, we need more number of iterations and less communication cost per iteration.
Therefore, a moderate $\beta$, such as $\beta=0.95$ in this case, is preferred.

\begin{figure}
	\begin{center}
		\includegraphics[height=35mm,trim=120 20 0 20]{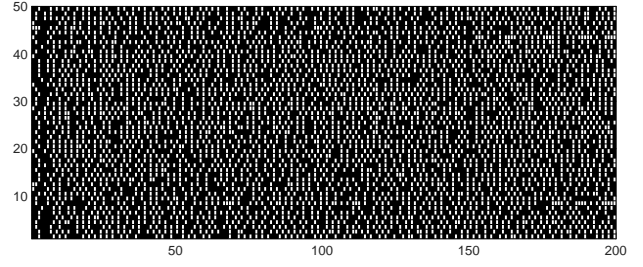}
		\caption{Censoring pattern of the first 200 iterations of COLA over random network for decentralized least squares.
			The horizontal axis is the number of iterations, and the vertical axis is the index of node.
			A dark dot represents that the node is censored at that time.\label{fig:ls-t1}}
	\end{center}
\end{figure}

\begin{figure}
	\begin{center}
		\includegraphics[height=48mm,trim=0 20 0 20]{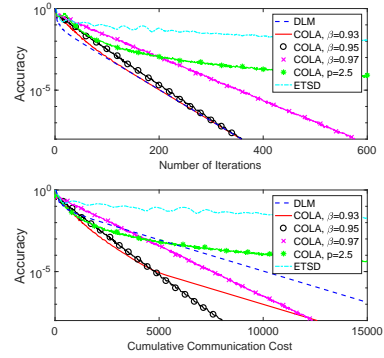}
		\caption{Performance of DLM, ETSD and COLA with different censoring thresholds over random network for decentralized least squares. In COLA, with the linear censoring thresholds $\tau^k = \alpha\cdot(\beta)^k$, we fix $\alpha=0.7$ and choose different $\beta$. With the sublinear censoring threshold $\tau^k = \alpha(k)^{-r}$, we choose $\alpha=1000$ and $r=2.5$. \label{fig:ls-beta}}
	\end{center}
\end{figure}

In Fig. \ref{fig:ls-beta}, we also compare COLA with ETSD, a communication-censored primal domain first-order method. ETSD adopts the Metropolis-Hastings rule to design its mixing matrix. It uses a linear censoring threshold $\alpha\cdot(\beta)^k$ and a sublinear step size $O((k)^{-\frac{2}{3}})$, where the parameters are all hand-tuned to achieve the best communication efficiency. From Fig. \ref{fig:ls-beta}, we observe that ETSD requires much more number of iterations and communication cost to reach a target accuracy comparing to COLA. The main reason of the unsatisfactory performance of ETSD is the diminishing step size, which is used to guarantee exact convergence. Similar performance gap can be observed in comparing the uncensored algorithms, sub-gradient descent and DLM.

\subsection{Decentralized Logistic Regression}
\label{subsec:lr}

In the decentralized logistic regression problem, the local cost function of node $i$ is
\begin{align*}
f_i(\tilde{x}) = \frac{1}{l_i} \sum_{l=1}^{l_i}  \ln \bigg(1+\exp(- y_{(i)l} q_{(i)l}^T \tilde{x} ) \bigg), \nonumber
\end{align*}
where $q_{(i)l} \in \mathcal{R}^p$ is the $l$th column of a matrix $Q_{(i)} \in\mathcal{R}^{p \times l_i}$, $y_{(i)l} \in \{-1, +1\}$ is the $l$th element of a binary vector $y_{(i)} \in \mathcal{R}^{l_i}$, and $l_i$ is the number of samples held by node $i$. The primal update of node $i$ at time $k$ in COLA is
\begin{align*}
x_{i}^{k+1}  = &x_{i}^{k} - \frac{1}{2cd_{ii}+\rho}
\bigg[ -\frac{1}{l_i} \sum_{l=1}^{l_i}
\frac{ y_{(i)l} \exp(- y_{(i)l} q_{(i)l}^T x_{i}^{k} ) q_{(i)l} }{1+\exp(- y_{(i)l} q_{(i)l}^T x_{i}^{k} )}\\
&\hspace{6.8em}+ c\sum\limits_{j \in \mathcal{N}_i} (\hat{x}_{i}^{k} - \hat{x}_{j}^{k}) + \mu_i^k \bigg],
\end{align*}
while the primal updates of ADMM and COCA have no explicit solutions. Therefore, we solve the subproblems therein by a gradient descent inner loop, which terminates when the $\ell_2$ norm of the gradient is less than $10^{-8}$.

\begin{figure}
	\centering
	\includegraphics[height=48mm,trim=0 20 0 20]{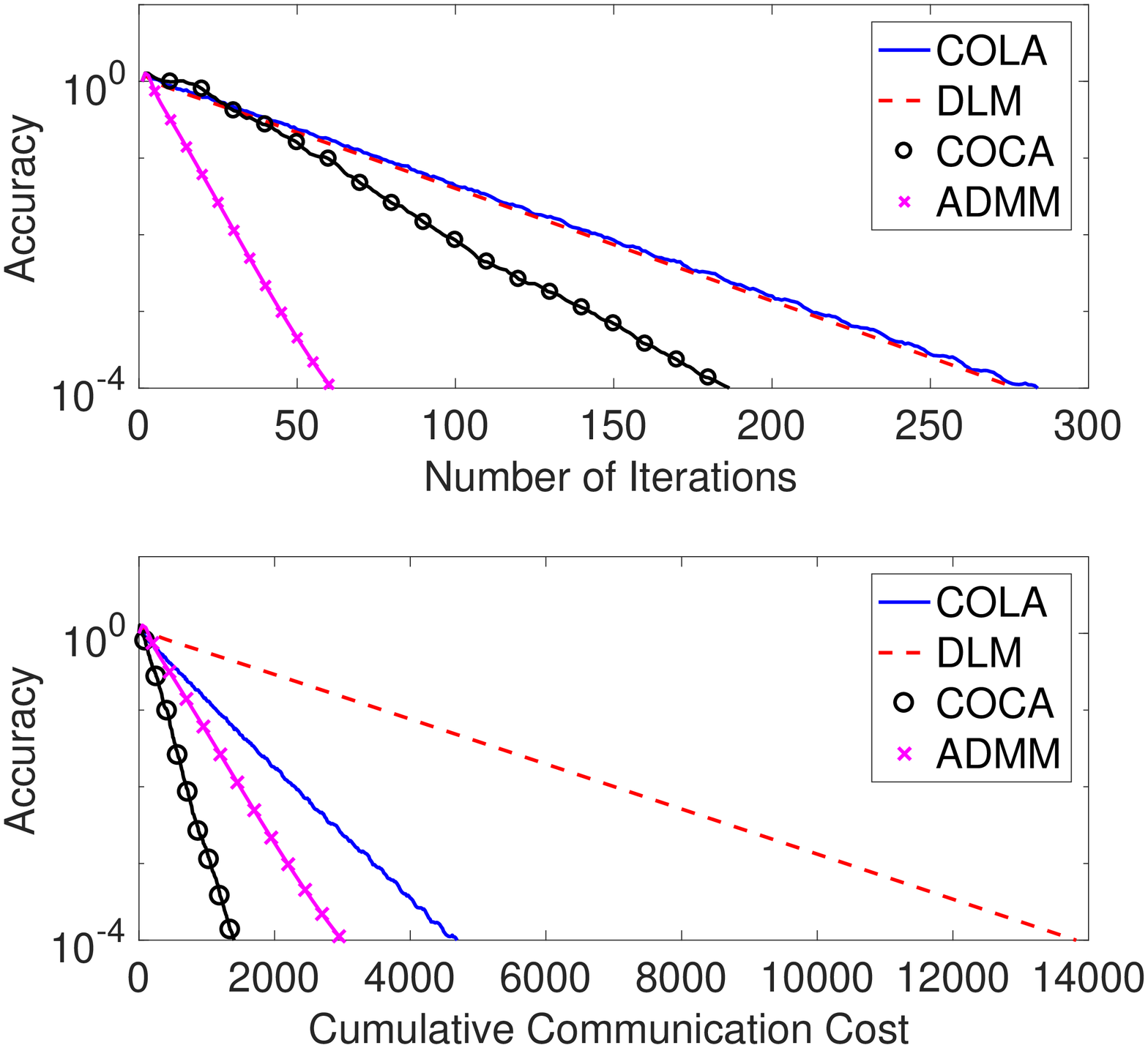}
	\caption{Performance over random network with $50$ nodes for decentralized logistic regression.\label{fig:lr-50}}
\end{figure}

\begin{figure}
	\centering
	\includegraphics[height=48mm,trim=0 20 0 20]{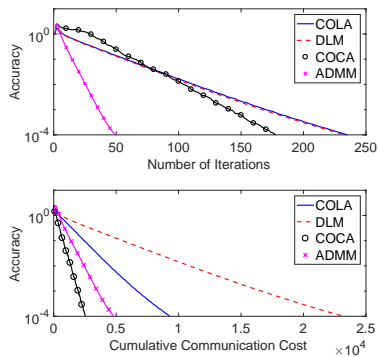}
	\caption{Performance over random network with $100$ nodes for decentralized logistic regression.\label{fig:lr-100}}
\end{figure}

We conduct simulations over two random networks with $n=50$ and $n=100$ nodes, in both of which $10\%$ of all possible bidirectional edges are randomly chosen to be connected. The dimension of local variables is $p=3$. The numbers of samples held by the nodes are i.i.d. and uniformly chosen from integers within $[1, 10]$. Entries of the first two rows of $Q_{(i)}$ follow the i.i.d. discrete uniform distribution on the set $\{0.1w\}$, $w=1,\cdots,10$, while entries of the last row are all set as $1$. Entries of $y_{(i)}$ are i.i.d. and follow the uniform distribution on $\{-1, 1\}$. As we have done in Section \ref{subsec:ls}, $c$ in ADMM is tuned to achieve the fastest convergence, and is also used for COCA; $c$ and $\rho$ in DLM are tuned to achieve the fastest convergence, and are also used for COLA. The censoring threshold in both COCA and COLA is set as $\tau^k = \alpha\cdot(\beta)^k$, with parameters $\alpha$ and $\beta$ hand-tuned to obtain the best communication efficiency.

As depicted in Figs. \ref{fig:lr-50} and \ref{fig:lr-100}, the four algorithms behave similarly to those in the least squares problem (Figs. \ref{fig:ls-line}--\ref{fig:ls-c}).
COLA saves nearly $2/3$ communication cost with few more iterations compared to DLM. To demonstrate its computation efficiency, we also show the CPU time for the four algorithms to reach target accuracies $10^{-4}$ and $10^{-5}$ in Table \ref{table:lr-time}. Notice that the two linearized algorithms, COLA and DLM, compute much faster than COCA and ADMM. The time spent by COLA in both networks is slightly more than that of DLM due to the communication censoring operations.

\begin{table}
	\centering
	\begin{tabular}{|c|c|c|c|c|c|}
		\hline
		$n$& Accuracy& COLA& DLM& COCA& ADMM\\
		\hline
		50& $10^{-4}$& 1.076s & 0.971s & 30.119s & 9.629s\\
		50& $10^{-5}$& 1.126s & 1.055s & 37.198s & 12.467s\\
		100& $10^{-4}$& 1.466s & 1.320s & 37.488s & 11.175s\\
		100& $10^{-5}$& 1.879s & 1.721s & 45.302s & 15.797s\\
		\hline
	\end{tabular}
	\caption{The time spent of the four algorithms in two networks with different numbers of nodes $n$ and target accuracies.\label{table:lr-time}}
\end{table}

\section{Conclusion}
\label{sec:con}

In this paper, we propose COLA, a communication- and computation-efficient decentralized consensus optimization algorithm. Compared to the classical ADMM, COLA uses the linearization technique to reduce the iteration-wise computation cost, and fits for networks where only light-weight computation is affordable. To compensate the sacrifice in the convergence speed, which is caused by the linearization step and results in low communication efficiency, COLA further introduces the communication-censoring strategy to prevent a node from transmitting its ``less-informative'' local variable to neighbors. We establish convergence and rates of convergence for COLA, and demonstrate the computation-communication tradeoff with numerical experiments. Our future work is to apply the linearization and communication censoring techniques to decentralized optimization applications in dynamic, online and stochastic environments.

\newpage

\appendices
\section{Proof of Lemma \ref{lemma2}}
\label{app_lemma2}

\begin{proof}
From \eqref{update-2}, it holds that
\begin{align} \label{lem2-0}
\phi^{k+1} -\phi^k &= \frac{c}{2}G_o\hat{x}^{k+1} \overset{\eqref{kkt-2}}{=} \frac{c}{2}G_o(\hat{x}^{k+1} - x^*) \\
&=\frac{c}{2}G_o(x^{k+1} - E^{k+1} - x^*), \notag
\end{align}
where the last equality uses the definition $E^{k+1}=x^{k+1}-\hat{x}^{k+1}$. Rearranging terms in \eqref{lem2-0} yields \eqref{lemma2-1}.

Also rearranging terms in \eqref{update-1} to place $\nabla f(x^{k})$ at the left side, we have
\begin{equation}
\nabla f(x^k) = (2cD+\rho I)(x^k - x^{k+1}) - cL_o\hat{x}^{k} - G_o^T\phi^k. \label{lem2-1}
\end{equation}
Subtracting \eqref{lem2-1} with (\ref{kkt-1}) and noticing the definitions of $D=\frac{1}{2}(L_o+L_u)$ and $L_o = \frac{1}{2}G_o^T G_o$, we have
\begin{align*}
  & \nabla f(x^{k}) - \nabla f(x^{*})  \\
= & (2cD+\rho I)(x^k - x^{k+1}) - cL_o\hat{x}^{k} -
G_o^T(\phi^{k}-\phi^*) \\
= & (cL_u+\rho I)(x^k - x^{k+1}) +cL_o(x^k - x^{k+1}) - cL_o \hat{x}^k \\
&- G_o^T(\phi^{k+1}-\phi^*)+G_o^T(\phi^{k+1}-\phi^k)\\
\overset{\eqref{lem2-0}}{=} & (cL_u+\rho I)(x^k - x^{k+1}) + cL_o(x^k - x^{k+1}) - cL_o \hat{x}^k  \\
&- G_o^T(\phi^{k+1}-\phi^*)
+cL_o (x^{k+1} - E^{k+1} - x^*) \\
\overset{\eqref{kkt-2}}{=} & (cL_u+\rho I)(x^k - x^{k+1}) - G_o^T(\phi^{k+1}-\phi^*) \\
&+ cL_o (E^{k} - E^{k+1}),
\end{align*}
which completes the proof.	
\end{proof}

\section{Proof of Theorem \ref{theorem:convergence}}
\label{app_convergence}

\begin{proof} Throughout the proof, we assume $\tau^0 > 0$. When $\tau^0 = 0$ such that all $\tau^k = 0$ and COLA degenerates to DLM, since the value of $\tau^0$ does not affect the operation of COLA, we can simply set $\tau^0$ as any positive constant.
	
\textbf{Step 1.} The proof in this step is analogous to the proof of Lemma 3 in \cite{Ling2015-DLM}, but more complicated due to the existence of censoring error. From Assumptions \ref{ass:convex} and \ref{ass:Lip}, the gradients of the convex local cost functions $\nabla f_i$ are Lipschitz continuous with constant $M>0$. Thus, we have
\begin{align}\label{strong-cor}
&\frac{1}{M}\|\nabla f(x^{k}) - \nabla f(x^{*})\|^2 \\
\leq& \langle \nabla f(x^{k}) - \nabla f(x^{*}) , x^k - x^*\rangle \notag\\
=& \langle \nabla f(x^{k}) - \nabla f(x^{*}) , x^{k+1} - x^*\rangle \notag\\
 &+ \langle \nabla f(x^{k}) - \nabla f(x^{*}) , x^k - x^{k+1}\rangle. \notag
\end{align}

For the second term at the right-hand side of \eqref{strong-cor}, we choose an upper bound
\begin{align}\label{th1-step1-0}
&\langle \nabla f(x^{k}) - \nabla f(x^{*}) , x^k - x^{k+1}\rangle\\
\leq &\frac{1}{M}\|\nabla f(x^{k}) - \nabla f(x^{*})\|^2
+ \frac{M}{4}\|x^k - x^{k+1}\|^2. \notag
\end{align}

To establish an upper bound for the first term at the right-hand side of \eqref{strong-cor}, we use \eqref{lemma2-1} in Lemma \ref{lemma2} to rewrite it as
\begin{align}\label{th1-step1-1}
& \langle \nabla f(x^{k}) - \nabla f(x^{*}) , x^{k+1} - x^*\rangle \\
= & \langle (cL_u+\rho I)(x^k-x^{k+1}) , x^{k+1} - x^*\rangle \notag\\
&- \langle G_o^T(\phi^{k+1}-\phi^*) , x^{k+1} - x^*\rangle \notag\\
&+ \langle cL_o (E^{k}-E^{k+1}) , x^{k+1} - x^*\rangle. \nonumber
\end{align}
We shall handle the terms at the right-hand side of \eqref{th1-step1-1} one by one. The first one satisfies
\begin{align}\label{th1-step1-1-temp-xxxx-1}
&\langle (cL_u+\rho I)(x^k-x^{k+1}), x^{k+1} - x^*\rangle  \\
= &  \frac{c}{2} \langle G_u (x^k-x^{k+1}), G_u(x^{k+1} - x^*)\rangle \notag\\
&+ \rho \langle x^k-x^{k+1}, x^{k+1} - x^*\rangle \notag\\
= & 2c\langle z^{k} - z^{k+1}, z^{k+1} - z^{*}\rangle + \rho\langle x^k-x^{k+1} , x^{k+1} - x^*\rangle, \nonumber
\end{align}
which uses the definitions $L_u = \frac{1}{2}G_u^T G_u$, $z^k = \frac{1}{2} G_u x^k$ and $z^* = \frac{1}{2} G_u x^*$. The second one satisfies
\begin{align}\label{th1-step1-1-temp-xxxx-2}
&-\langle G_o^T(\phi^{k+1}-\phi^*) , x^{k+1} - x^*\rangle \\
=& -\langle \phi^{k+1}-\phi^* , G_o(x^{k+1} - x^*)\rangle \notag\\
\overset{\text{\eqref{lemma2-2}}}{=} &
\frac{2}{c}\langle \phi^{k+1} - \phi^{*},\phi^{k} - \phi^{k+1}\rangle
-\langle \phi^{k+1}-\phi^* , G_oE^{k+1}\rangle. \nonumber
\end{align}
The third one satisfies
\begin{align}\label{th1-step1-1-temp-xxxx-3}
&\langle cL_o  (E^{k}-E^{k+1}) , x^{k+1} - x^*\rangle \\
=& \frac{c}{2} \langle G_o (E^{k}-E^{k+1}), G_o(x^{k+1} - x^*)\rangle \notag\\
\overset{\text{\eqref{lemma2-2}}}{=} & \langle G_o(E^{k}-E^{k+1}) , \phi^{k+1} - \phi^k\rangle  \notag\\
&+ \frac{c}{2} \langle G_o(E^{k}-E^{k+1}), G_oE^{k+1}\rangle, \nonumber
\end{align}
which uses the definition $L_o = \frac{1}{2}G_o^T G_o$. Summing up \eqref{th1-step1-1-temp-xxxx-1}, \eqref{th1-step1-1-temp-xxxx-2} and \eqref{th1-step1-1-temp-xxxx-3}, applying the equality $2\langle v_a-v_b, v_b-v_c \rangle = \|v_a-v_c\|^2 - \|v_a-v_b\|^2 - \|v_b-v_c\|^2$ that holds for any vectors $v_a$, $v_b$ and $v_c$ to $\langle z^{k} - z^{k+1}, z^{k+1} - z^{*}\rangle$, $\langle x^k-x^{k+1} , x^{k+1} - x^*\rangle$ and $\langle \phi^{k+1} - \phi^{*},\phi^{k+1} - \phi^{k}\rangle$, and then reorganizing terms, we can rewrite \eqref{th1-step1-1} as
\begin{align}
& \langle \nabla f(x^{k}) - \nabla f(x^{*}) , x^{k+1} - x^*\rangle \label{th1-step1-2} \\
=&  c( \|z^k - z^*\|^2 - \|z^k - z^{k+1}\|^2 - \|z^{k+1} - z^*\|^2) \notag \\
&+\frac{\rho}{2} ( \|x^k - x^*\|^2 - \|x^k - x^{k+1}\|^2 - \|x^{k+1} - x^*\|^2)\notag\\
&+\frac{1}{c} ( \|\phi^k - \phi^*\|^2 - \|\phi^k - \phi^{k+1}\|^2 - \|\phi^{k+1} - \phi^*\|^2) \notag\\
&-\langle \phi^{k+1}-\phi^* , G_oE^{k+1}\rangle
+ \langle G_o(E^{k}-E^{k+1}) , \phi^{k+1} - \phi^k\rangle \notag\\
&+ \frac{c}{2} \langle G_o(E^{k}-E^{k+1}) , G_oE^{k+1}\rangle \notag\\
\overset{\text{\eqref{eq:energy}}}{=} & V^k -V^{k+1} \notag\\
&- c\|z^k - z^{k+1}\|^2
- \frac{\rho}{2}\|x^k - x^{k+1}\|^2
- \frac{1}{c}\|\phi^k-\phi^{k+1}\|^2 \notag\\
&-\langle G_oE^{k+1} , \phi^{k}-\phi^* \rangle
+ \langle  G_o(E^{k} - 2E^{k+1}), \phi^{k+1} - \phi^k \rangle \notag\\
&+ \frac{c}{2} \langle G_o(E^{k}-E^{k+1}), G_oE^{k+1}\rangle.\notag
\end{align}

For the term $-\langle G_oE^{k+1}, \phi^{k}-\phi^* \rangle$, we observe that
\begin{align} \label{th1-step1-2-temp-yyyy-1}
&-\langle G_oE^{k+1}, \phi^{k}-\phi^* \rangle \\
\leq& \frac{c_1}{2} \|G_oE^{k+1}\| \|\phi^{k}-\phi^*\|^2 + \frac{1}{2c_1} \|G_oE^{k+1}\| \notag\\
\leq& \frac{c_1 \sigma_{\max}(G_o) \|E^{k+1}\|}{2} \|\phi^{k}-\phi^*\|^2 + \frac{ \sigma_{\max}(G_o)}{2c_1} \|E^{k+1}\|, \nonumber
\end{align}
where $c_1>0$ is any positive constant. Similarly, for $\langle G_o(E^{k} - 2E^{k+1}), \phi^{k+1} - \phi^k \rangle$, it holds
\begin{align} \label{th1-step1-2-temp-yyyy-2}
&\langle G_o(E^{k} - 2E^{k+1}), \phi^{k+1} - \phi^k \rangle \\
\leq& \frac{c_2}{2} \|G_o(E^{k} - 2E^{k+1})\| \|\phi^{k+1} - \phi^k\|^2 \notag\\
&+ \frac{1}{2c_2} \|G_o(E^{k} - 2E^{k+1})\| \notag\\
\leq& \frac{c_2 \sigma_{\max}(G_o)(\|E^{k}\|+2\|E^{k+1}\|) }{2} \|\phi^{k+1} - \phi^k\|^2 \notag\\
&+ \frac{ \sigma_{\max}(G_o)}{2c_2}(\|E^{k}\|+2\|E^{k+1}\|), \nonumber
\end{align}
where $c_2>0$ is any positive constant.
For $\frac{c}{2} \langle G_o(E^{k}-E^{k+1}), G_oE^{k+1}\rangle$, we have
\begin{align} \label{th1-step1-2-temp-yyyy-3}
&\frac{c}{2} \langle G_o(E^{k}-E^{k+1}), G_oE^{k+1}\rangle \\
\leq& \frac{c}{2} \langle G_o E^{k}, G_oE^{k+1}\rangle \notag\\
\leq& \frac{c}{4} \|G_o E^{k}\|^2 + \|G_oE^{k+1}\|^2 \notag \\
\leq& \frac{c}{4} \sigma_{\max}^2(G_o) \|E^{k}\|^2 + \frac{c}{4} \sigma_{\max}^2(G_o) \|E^{k+1}\|^2. \notag
\end{align}
Using \eqref{th1-step1-2-temp-yyyy-1}, \eqref{th1-step1-2-temp-yyyy-2} and \eqref{th1-step1-2-temp-yyyy-3} to rewrite \eqref{th1-step1-2} followed by substituting the result and \eqref{th1-step1-0} into \eqref{strong-cor}, we obtain
\begin{align}
\hspace{-1em}& V^k -V^{k+1}  \label{th1-step1-3}\\
\hspace{-1em} - &c\|z^k - z^{k+1}\|^2 - \left( \frac{\rho}{2} -\frac{M}{4} \right) \|x^k - x^{k+1}\|^2 - \frac{1}{c}\|\phi^k-\phi^{k+1}\|^2 \notag \\
\hspace{-1em}       + & \frac{c_1 \sigma_{\max}(G_o) \|E^{k+1}\|}{2} \|\phi^{k}-\phi^*\|^2 + \frac{ \sigma_{\max}(G_o)}{2c_1} \|E^{k+1}\| \notag \\
\hspace{-1em}       + & \frac{c_2 \sigma_{\max}(G_o)(\|E^{k}\|+2\|E^{k+1}\|)}{2} \|\phi^{k+1} - \phi^k\|^2 \notag\\
\hspace{-1em}	     + & \frac{\sigma_{\max}(G_o)}{2c_2} (\|E^{k}\|+2\|E^{k+1}\|) \notag \\
\hspace{-1em}       + & \frac{c}{4} \sigma_{\max}^2(G_o) \|E^{k}\|^2 + \frac{c}{4} \sigma_{\max}^2(G_o) \|E^{k+1}\|^2\geq0 .\notag
\end{align}

\textbf{Step 2.} Now we characterize the upper bound of $\|E^{k}\|$. According to the censoring strategy, $\hat{x}_i^{k} - x_i^{k}$, the $i$th block of $E^{k}$, becomes $0$ if $\|\hat{x}_i^{k-1} - x_i^{k}\| \geq \tau^{k}$ or equals $\hat{x}_i^{k-1} - x_i^{k}$ otherwise. In both cases, it holds $\|\hat{x}_i^{k} - x_i^{k}\| \leq \tau^{k}$. Therefore, we know $\|E^{k}\| \leq \sqrt{n} \tau^{k}$. Since $\tau^k$ is non-increasing, it also holds $\|E^{k+1}\| \leq \sqrt{n} \tau^{k+1} \leq \sqrt{n} \tau^{k}$. Thus, \eqref{th1-step1-3} becomes
\begin{align}
&c\|z^k - z^{k+1}\|^2 + \left( \frac{\rho}{2} - \frac{M}{4} \right) \|x^k - x^{k+1}\|^2\label{eq:conv-inner1} \\
&+ \left( \frac{1}{c} - \frac{3c_2 \sigma_{\max}(G_o)\sqrt{n}\tau^k}{2} \right) \|\phi^k-\phi^{k+1}\|^2 \notag \\
\leq& V^k -V^{k+1}+ \frac{c_1 \sigma_{\max}(G_o) \sqrt{n} \tau^{k}}{2} \|\phi^k - \phi^*\|^2  + \notag\\
&\left( \frac1{2c_1} + \frac{3}{2c_2} \right)\sigma_{\max}(G_o) \sqrt{n} \tau^k + \frac{cn\sigma_{\max}^2(G_o)}{2}  ( \tau^{k})^2. \notag
\end{align}

Setting the constants $c_1$ and $c_2$ in \eqref{eq:conv-inner1} as
\begin{align*}
c_1=3c_2=\frac{1}{c\sigma_{\max}(G_o)\sqrt{n} \tau^0},
\end{align*}
we rewrite \eqref{eq:conv-inner1} to
\begin{align} \label{eq:conv-inner1-temp-xxxx}
&c\|z^k - z^{k+1}\|^2 + \big( \frac{\rho}{2} - \frac{M}{4} \big) \|x^k - x^{k+1}\|^2 \\
&+ \big( \frac{1}{c} - \frac{\tau^k}{2c\tau^0} \big) \|\phi^k-\phi^{k+1}\|^2 \notag \\
\leq& V^k -V^{k+1}+ \frac{\tau^{k}}{2c\tau^0} \|\phi^k - \phi^*\|^2  \notag\\
&+ 5 c n\sigma^2_{\max}(G_o) \tau^0 \tau^k  + \frac{cn \sigma_{\max}^2(G_o) ( \tau^{k})^2}{2}. \notag
\end{align}
Since $\tau^k$ is non-decreasing, $\frac{1}{c} - \frac{\tau^k}{2c\tau^0} \geq \frac{1}{2c}$. Meanwhile, by the definition of the energy function, $V^k \geq \frac{1}{c} \|\phi^k - \phi^*\|^2$. By the definitions of $z^k = \frac{1}{2}G_u x^k$ and $L_u = \frac{1}{2}G_u^T G_u$, $\|z^k - z^{k+1}\|^2 = \frac{1}{4} \|G_u(x^k - x^{k+1})\|^2 \geq \frac{1}{2}\lambda_{\min}(L_u) \|x^k - x^{k+1}\|^2$. Applying these three facts to \eqref{eq:conv-inner1-temp-xxxx} yields
\begin{align} \label{eq:conv-inner1-001}
& \frac{1}{2} \big(c \lambda_{\min}(L_u) + \rho - \frac{M}{2} \big) \|x^k - x^{k+1}\|^2 +  \frac{1}{2c} \|\phi^k-\phi^{k+1}\|^2 \notag \\
& \leq \big(1+\frac{\tau^{k}}{2 \tau^0} \big) V^k -V^{k+1}\notag\\
& \hspace{2em} +  5c n\sigma^2_{\max}(G_o) \tau^0 \tau^k + \frac{cn \sigma_{\max}^2(G_o) ( \tau^{k})^2}{2}.
\end{align}

\textbf{Step 3.} Define
\begin{align*}
\theta^k := 5cn\sigma^2_{\max}(G_o) \tau^0 \tau^k + \frac{cn \sigma_{\max}^2(G_o) ( \tau^{k})^2}{2},
\end{align*}
which is a non-increasing non-negative summable summable sequence as $\tau^k$ is. The left-hand side of \eqref{eq:conv-inner1-001} is non-negative because $c \lambda_{\min}(L_u) + \rho \geq \frac{M}{2}$. Thus, \eqref{eq:conv-inner1-001} leads to
\begin{align} \label{eq:conv-inner1-002}
\big(1+\frac{\tau^{k}}{2\tau^0}\big) V^k - V^{k+1} + \theta^k \geq 0.
\end{align}
We use this inequality to show that $V^k$ has a finite upper bound. From (\ref{eq:conv-inner1-002}) we have
	\begin{align}
	&V^{k+1} \leq \big(1+\frac{\tau^{k}}{2\tau^0}\big) V^k + \theta^k \notag\\
	\leq &\big(1+\frac{\tau^{k}}{2\tau^0}\big) \left( \big(1+\frac{\tau^{k-1}}{2\tau^0}\big)
	V^{k-1} + \theta^{k-1}\right) + \theta^k \notag \\
	\leq & \hspace{10em} \ldots\notag\\
	\leq & V^0 \prod_{k'=0}^{k}\big(1+\frac{\tau^{k'}}{2\tau^0}\big)
	+ \sum_{k''=0}^{k-1} \bigg(\theta^{k''} \prod_{k'=k''+1}^{k}
	\big(1+\frac{\tau^{k'}}{2\tau^0}\big)\bigg) + \theta^k \notag \\
	\leq & V^0 \prod_{k'=0}^{k}\big(1+\frac{\tau^{k'}}{2\tau^0}\big)
	+ \sum_{k''=0}^{k} \theta^{k''} \prod_{k'=0}^{k}
	\big(1+\frac{\tau^{k'}}{2\tau^0}\big) \notag \\
	\leq & \big( V^0 + \sum_{k''=0}^{\infty}\theta^{k''} \big)
	\prod_{k'=0}^{\infty}\big(1+\frac{\tau^{k'}}{2\tau^0}\big)\notag \\
	\leq & \big( V^0 + \sum_{k''=0}^{\infty}\theta^{k''} \big)
	\exp\left\{\sum_{k'=0}^{\infty} \frac{\tau^{k'}}{2\tau^0}\right\}
	< \infty,\label{eq:conv-mu}
	\end{align}
where we use the inequality $1+a\leq \exp\{a\}$ that holds for all $a \in \mathcal{R}$, and the fact that $\tau^k$ and $\theta^k$ are both non-negative and summable. Thus, we conclude that $V^k$ has a finite upper bound, denoted as $\bar{V}$.
	
\textbf{Step 4.} Now we begin to prove the convergence. Summing up \eqref{eq:conv-inner1-001} from $k=0$ to $k=\infty$ yields
	\begin{align}
	& \sum_{k=0}^{\infty}
	\bigg[\frac{1}{2}\big( c \lambda_{\min}(L_u) + \rho - \frac{M}{2} \big) \|x^k - x^{k+1}\|^2\notag\\
	&\hspace{2em}+ \frac{1}{2c} \|\phi^k-\phi^{k+1}\|^2 \bigg]
	\nonumber \\
	\leq & V^0 + \sum_{k=0}^{\infty} \frac{\tau^{k}}{2\tau^0} V^k  +  \sum_{k=0}^{\infty}\theta^k \notag\\
	\leq & V^0 + \frac{ \bar{V}}{2\tau^0} \sum_{k=0}^{\infty} \tau^{k}  +  \sum_{k=0}^{\infty}\theta^k < \infty.\label{eq:conv}
	\end{align}
Thus, we conclude that $\lim_{k \rightarrow \infty} (x^k - x^{k+1}) = 0$ and $\lim_{k \rightarrow \infty} (\phi^k-\phi^{k+1}) = 0$. Following these limiting properties, when $k\rightarrow \infty$, the dual update \eqref{update-2} leads to $G_o\hat{x}^k \rightarrow 0$, which implies that
\begin{align}
G_ox^k= G_o\hat{x}^k+G_oE^k&\rightarrow 0. \label{step5-kkt-2}
\end{align}
Also, we have $L_o\hat{x}^k \rightarrow 0$ as $L_o = \frac{1}{2} G_o^T G_o$. Consequently, in the limit \eqref{update-1} becomes
\begin{equation}\label{step5-kkt-1}
\nabla f(x^k)+G_o^T\phi^k \rightarrow 0.
\end{equation}
Meanwhile, by definition
\begin{equation}\label{step5-kkt-3}
\frac{1}{2}G_u  x^k - z^k = 0.
\end{equation}
Comparing \eqref{step5-kkt-1}, \eqref{step5-kkt-2} and \eqref{step5-kkt-3} with the KKT conditions \eqref{kkt-1}, \eqref{kkt-2} and \eqref{kkt-3}, we conclude that	the triple $(x^k, z^k, \phi^k)$ satisfies the KKT conditions when $k$ goes to infinity.
	
Next, we show that $\{(x^k, z^k, \phi^k)\}$ converges when $k\rightarrow \infty$. Since the sequence $V^k$ is bounded, $\|x^k - x^*\|$ and $\|\phi^k - \phi^*\|$ are also bounded. Thus, there exists a subsequence $\{(x^{k_t},\phi^{k_t})\}$ which converges to a cluster point $(x^\infty,\phi^\infty)$ of $\{(x^k, \phi^k)\}$ and $(x^\infty,\phi^\infty)$ is optimal to \eqref{eq:obj-ADM}.
	
Construct another energy function $V_{\infty}^k := \frac{\rho}{2}\|x^k-x^\infty\|^2 + c\|z^k-z^\infty\|^2 +\frac{1}{c}\|\phi^k-\phi^\infty\|^2$, where $z^\infty:=\frac{1}{2}G_u x^\infty$. The analysis for $V^k$ can be applied to $V_{\infty}^k$. In particular, analogous to \eqref{eq:conv-mu}, given any fixed $k_t$, we have
\begin{align}\label{V_infty}
V_{\infty}^k &\leq \bigg( V_{\infty}^{k_t} + \sum_{k''=k_t}^{\infty}\theta^{k''} \bigg)
\exp\bigg\{\sum_{k'=k_t}^{\infty} \frac{ \tau^{k'}} {\tau^{k_t}} \bigg\} \\
&\leq \bigg( V_{\infty}^{k_t} + \sum_{k''=k_t}^{\infty}\theta^{k''} \bigg)
\exp\bigg\{\sum_{k'=k_t}^{\infty} \frac{ \tau^{k'}} {\tau^0} \bigg\},\notag
\end{align}
for any $k\geq k_t$. Observe that $(x^{k_t}, \phi^{k_t}) \rightarrow (x^{\infty}, \phi^{\infty})$ leads to $V_{\infty}^{k_t} \rightarrow 0$. In addition, the sequences $\theta^k$ and $\tau^k$ $\sum_{k''=0}^{\infty}\theta^{k''} < \infty$ and $\sum_{k'=0}^{\infty} \tau^{k'} < \infty$, respectively. Therefore, for any $\epsilon > 0$ there exists an integer $t_0$ such that
\begin{equation}
\hspace{-1em}V_{\infty}^{k_{t_0}} < \frac{\epsilon}{4}, \quad \sum_{k''=k_{t_0}}^{\infty}\theta^{k''} < \frac{\epsilon}{4}, \quad \text{and} \quad \sum_{k'=k_{t_0}}^{\infty} \tau^{k'} < \tau^0 \log2.\notag
\end{equation}
	Then according to (\ref{V_infty}) we have $V_{\infty}^k < \epsilon$ for all $k\geq k_{t_0}$.
	Therefore, $V_{\infty}^k \rightarrow 0$ as $k \rightarrow \infty$.
	From the definition of $V_{\infty}^k$,
	we conclude that $\{(x^k, z^k, \phi^{k})\}$ converges to
	$(x^\infty, z^\infty, \phi^\infty)$, which is optimal to \eqref{eq:obj-ADM}.
\end{proof}

\section{Proof of Theorem \ref{theorem:linear}}
\label{app_linear}

\begin{proof} \textbf{Step 1.} From Assumption \ref{ass:strong}, the local cost functions $f_i$ are strongly convex with constant $m>0$. Thus, we have
\begin{align}
m\|x^{k+1}-x^*\|^2
\leq & \langle \nabla f(x^{k+1})-\nabla f(x^*), x^{k+1}-x^* \rangle \label{thm2step1} \\
= & \langle \nabla f(x^{k})-\nabla f(x^*), x^{k+1}-x^* \rangle \notag\\
&+ \langle \nabla f(x^{k+1})-\nabla f(x^k), x^{k+1}-x^* \rangle. \nonumber
\end{align}

Observe that $\langle \nabla f(x^{k})-\nabla f(x^*), x^{k+1}-x^* \rangle$, the first term at the right-hand side of \eqref{thm2step1}, also appears in \eqref{strong-cor} in the proof of Theorem \ref{theorem:convergence}. We follow the derivation to obtain \eqref{th1-step1-2}, but then look for new upper bounds of $\langle G_oE^{k+1}, \phi^{k}-\phi^* \rangle$ and $\langle  G_oE^{k} , \phi^{k+1} - \phi^k \rangle$, which are different to those in \eqref{th1-step1-2-temp-yyyy-1} and \eqref{th1-step1-2-temp-yyyy-2}. For the term $\langle G_oE^{k+1}, \phi^{k}-\phi^* \rangle$, we observe that
\begin{align} \label{th2-step1-2-temp-yyyy-1}
&\langle G_oE^{k+1}, \phi^{k}-\phi^* \rangle \\
= &\langle G_oE^{k+1}, \phi^{k+1}-\phi^*\rangle + \langle G_oE^{k+1},\phi^{k}-\phi^{k+1} \rangle \notag\\
\leq & \frac{c_1}{2} \|G_oE^{k+1}\|^2 + \frac{1}{2c_1} \|\phi^{k+1}-\phi^*\|^2 \notag\\
&+ \frac{c_1}{2} \|G_oE^{k+1}\|^2 + \frac{1}{2c_1} \|\phi^{k}-\phi^{k+1}\|^2 \nonumber \\
\leq &c_1 \sigma^2_{\max}(G_o) \|E^{k+1}\|^2 \notag\\
&+ \frac{1}{2c_1} \|\phi^{k+1}-\phi^*\|^2 + \frac{1}{2c_1} \|\phi^{k}-\phi^{k+1}\|^2,\notag
\end{align}
where $c_1>0$ is any positive constant. For $\langle  G_oE^{k} , \phi^{k+1} - \phi^k \rangle$, it holds
\begin{align} \label{th2-step1-2-temp-yyyy-2}
&\langle  G_oE^{k}, \phi^{k+1} - \phi^k \rangle\\
\leq& \frac{c_2}{2} \|G_oE^{k}\|^2 + \frac{1}{2c_2} \|\phi^{k+1} - \phi^k\|^2 \notag\\
\leq & \frac{c_2 \sigma^2_{\max}(G_o) \|E^{k}\|^2}{2} + \frac{1}{2c_2} \|\phi^{k+1} - \phi^k\|^2, \notag
\end{align}
where $c_2>0$ is any positive constant.

For the second term at the right-hand side of \eqref{thm2step1}, we have
\begin{align}
&\langle \nabla f(x^{k+1})-\nabla f(x^k), x^{k+1}-x^* \rangle\label{th2-step1-2}\\
\leq& \frac{c_3}{2}\|\nabla f(x^{k+1})-\nabla f(x^k)\|^2 + \frac{1}{2c_3}\|x^{k+1}-x^*\|^2 \notag \\
\leq& \frac{c_3M^2}{2}\|x^{k+1}-x^k\|^2 + \frac{1}{2c_3}\|x^{k+1}-x^*\|^2,\notag
\end{align}
where $c_3>0$ is any positive constant. The last inequality uses the fact that the gradients of the local cost functions $\nabla f_i$ are Lipschitz continuous with constant $M>0$ according to Assumption \ref{ass:Lip}.

Using \eqref{th2-step1-2-temp-yyyy-1}, \eqref{th2-step1-2-temp-yyyy-2} and \eqref{th1-step1-2-temp-yyyy-3} to rewrite \eqref{th1-step1-2} followed by substituting the result and \eqref{th2-step1-2} into \eqref{thm2step1}, we obtain
\begin{align}
& V^{k+1} \leq V^k - c\|z^k - z^{k+1}\|^2 \label{th2-step1-3-temp}\\
&- \big( \frac{\rho}{2} - \frac{c_3M^2}{2} \big)\|x^k - x^{k+1}\|^2 \notag\\
&- \big( \frac{1}{c} - \frac{1}{2c_1} - \frac{1}{2c_2} \big) \|\phi^k-\phi^{k+1}\|^2 \notag \\
&- \big( m-\frac{1}{2c_3} \big)\|x^{k+1} - x^*\|^2
+ \frac{1}{2c_1}\|\phi^{k+1} - \phi^*\|^2 \nonumber \\
&+ \big(\frac{c_2}{2} + \frac{c}{4} \big)\sigma^2_{\max}(G_o) \|E^{k}\|^2 + \big(c_1 + \frac{c}{4} \big)\sigma^2_{\max}(G_o) \|E^{k+1}\|^2. \notag
\end{align}
By the same reasoning in the proof of Theorem \ref{theorem:convergence}, $\|E^{k}\| \leq \sqrt{n} \tau^k$ and $\|E^{k+1}\| \leq \sqrt{n} \tau^k$. Thus, \eqref{th2-step1-3-temp} becomes
\begin{align} \label{th2-step1-3}
& V^{k+1} \leq V^k - c\|z^k - z^{k+1}\|^2 \\
&- \big( \frac{\rho}{2} - \frac{c_3M^2}{2} \big)\|x^k - x^{k+1}\|^2 \notag\\
&- \big( \frac{1}{c} - \frac{1}{2c_1} - \frac{1}{2c_2} \big) \|\phi^k-\phi^{k+1}\|^2 \notag \\
&- \big( m-\frac{1}{2c_3} \big)\|x^{k+1} - x^*\|^2 + \frac{1}{2c_1}\|\phi^{k+1} - \phi^*\|^2 + s n (\tau^k)^2. \nonumber
\end{align}
where $$s := \left(c_1 + \frac{c_2}{2} + \frac{c}{2} \right) \sigma^2_{\max}(G_o) > 0.$$
	
\textbf{Step 2.} Now we are going to find constants $\delta>0$ and $ \gamma \geq 0$ such that
\begin{equation}\label{thm2step2aim}
(1+\delta)V^{k+1} \leq V^k+ \gamma n (\tau^k)^2.
\end{equation}

Given any $\delta$, using the definition of the energy function $V^{k+1}$ to rewrite \eqref{th2-step1-3} as
\begin{align} \label{th2-step2-1}
& (1+\delta)V^{k+1} \leq V^k - c\|z^k - z^{k+1}\|^2 \\
&- \big( \frac{\rho}{2} - \frac{c_3M^2}{2} \big)\|x^k - x^{k+1}\|^2\notag \\
&- \big( \frac{1}{c} - \frac{1}{2c_1} - \frac{1}{2c_2} \big) \|\phi^k-\phi^{k+1}\|^2 \notag\\
&- \big( m-\frac{1}{2c_3} - \frac{\rho \delta}{2} \big)\|x^{k+1} - x^*\|^2 \notag\\
&+ \big(\frac{1}{2c_1} + \frac{\delta}{c}\big) \|\phi^{k+1} - \phi^*\|^2 + c \delta \|z^{k+1}-z^*\|^2 + s n (\tau^k)^2. \notag
\end{align}
We shall replace the terms $\|z^{k+1}-z^*\|^2$ and $\|\phi^{k+1}-\phi^*\|^2$ in \eqref{th2-step2-1} with terms $\|z^k - z^{k+1}\|^2$, $ \|z^{k+1} - z^*\|^2$, $\|x^k - x^{k+1}\|^2$, $\|x^{k+1} - x^*\|^2$ and $(\tau^k)^2$.

For $\|z^{k+1}-z^*\|^2$, because $z^{k+1}-z^* = \frac{G_u}{2} (x^{k+1}-x^*)$, we have
\begin{equation}
\|z^{k+1}-z^*\|^2 \leq \frac{\sigma^2_{\max}(G_u)}{4} \|x^{k+1} - x^*\|^2.\label{th2-step2-2}
\end{equation}

To handle $\|\phi^{k+1}-\phi^*\|^2$, use the fact that $L_u = \frac{1}{2}G_u^T G_u$ and reorganize \eqref{lemma2-1} to obtain
\begin{align} \label{lemma2-1-thm2}
&G_o^T(\phi^{k+1}-\phi^*) \\
=& - \left( \nabla f(x^{k}) - \nabla f(x^{*}) \right) + cG_u^T(z^k - z^{k+1}) \notag\\
&+ \rho(x^k - x^{k+1}) + cL_o (E^k-E^{k+1}).\notag
\end{align}
Since both $\phi^{k+1}$ and $\phi^*$ are in the column space of $G_o$, the left-hand side of \eqref{lemma2-1-thm2} is lower-bounded by
\begin{align}
\tilde{\sigma}^2_{\min}(G_o)\|\phi^{k+1}-\phi^*\|^2 \leq \|G_o^T(\phi^{k+1}-\phi^*)\|^2. \label{th2-step2-2-temp-xxxx}
\end{align}
The right-hand side of \eqref{lemma2-1-thm2} is upper-bounded by
\begin{align}\label{th2-step2-2-temp-yyyy}
      &\|- \left( \nabla f(x^{k}) - \nabla f(x^{*}) \right) + cG_u^T(z^k - z^{k+1}) \\
     &\hspace{.5em} + \rho(x^k - x^{k+1}) + cL_o (E^k-E^{k+1})\|^2  \notag\\
\leq  &4\|\nabla f(x^{k})-\nabla f(x^*)\|^2 + 4\|cG_u^T(z^k - z^{k+1})\|^2 \notag\\
&+ 4\|\rho(x^k - x^{k+1})\|^2 + 4\|cL_o (E^k-E^{k+1})\|^2 \notag \\
	\leq  &8\|\nabla f(x^{k+1})-\nabla f(x^*)\|^2 + 8\|\nabla f(x^{k})-\nabla f(x^{k+1})\|^2\notag\\	
	&+ 4\|cG_u^T(z^k - z^{k+1})\|^2  + 4\| \rho(x^k - x^{k+1})\|^2 \notag\\
	&+ 8  \|cL_o E^k\|^2 + 8 \|cL_oE^{k+1}\|^2 \notag \\
	\leq  &8M^2 \|x^{k+1}-x^*\|^2 + (8M^2 + 4\rho^2) \|x^{k}-x^{k+1}\|^2 \notag\\
	&+ 4c^2\sigma^2_{\max}(G_u) \|z^k - z^{k+1}\|^2 + 4c^2\sigma^4_{\max}(G_o) n (\tau^k)^2. \notag
\end{align}
The last inequality uses the fact that $\nabla f$ is Lipschitz continuous with constant $M > 0$ such that $$ \|\nabla f(x^{k+1})-\nabla f(x^*)\|^2 \leq M^2 \|x^{k+1}-x^*\|^2,$$  $$\|\nabla f(x^{k})-\nabla f(x^{k+1})\|^2 \leq M^2 \|x^{k}-x^{k+1}\|^2,$$ and the definition of $L_o = \frac{1}{2}G_o^T G_o$ such that $$\|cL_o E^k\|^2 \leq \frac{c^2}{4} \sigma^2_{\max}(G_o) \|E^k\|^2 \leq \frac{c^2}{4} \sigma^4_{\max}(G_o) n (\tau^k)^2,$$ $$\|cL_o E^{k+1}\|^2 \leq \frac{c^2}{4} \sigma^4_{\max}(G_o) n (\tau^k)^2.$$ Combining \eqref{lemma2-1-thm2}, \eqref{th2-step2-2-temp-xxxx} and \eqref{th2-step2-2-temp-yyyy}, we obtain
\begin{align}\label{th2-step2-3}
& \|\phi^{k+1}-\phi^*\|^2 \leq \frac{1}{\tilde{\sigma}^2_{\min}(G_o)}
\left( 8M^2 \|x^{k+1}-x^*\|^2 \right.  \\
&+ (8M^2 + 4\rho^2) \|x^{k}-x^{k+1}\|^2 + 4c^2\sigma^2_{\max}(G_u) \|z^k - z^{k+1}\|^2 \nonumber \\
&+ \left.4c^2\sigma^4_{\max}(G_o) n (\tau^k)^2 \right).\notag
\end{align}

Thus, we can use \eqref{th2-step2-2} and \eqref{th2-step2-3} to rewrite \eqref{th2-step2-1} as
\begin{align}\label{thm2step2}
&(1+\delta)V^{k+1}\\ \leq& V^k - c\bigg(1-\big(\frac{1}{2c_1} + \frac{\delta}{c} \big) \frac{4c\sigma^2_{\max}(G_u)}{\tilde{\sigma}^2_{\min}(G_o)}\bigg) \|z^k - z^{k+1}\|^2 \notag \\
& \hspace{-1.5em} - \bigg( \frac{\rho}{2} - \frac{c_3M^2}{2} - \big(\frac{1}{2c_1} + \frac{\delta}{c} \big)\frac{(8M^2 + 4\rho^2)}{\tilde{\sigma}^2_{\min}(G_o)} \bigg) \|x^k - x^{k+1}\|^2 \notag\\
& \hspace{-1.5em} - \bigg( \frac{1}{c} - \frac{1}{2c_1} - \frac{1}{2c_2} \bigg) \|\phi^k-\phi^{k+1}\|^2 \notag \\
& \hspace{-1.5em} - \bigg( m-\frac{1}{2c_3}-\frac{\rho\delta}{2} - \frac{c\delta\sigma^2_{\max}(G_u)}{4} - \big(\frac{1}{2c_1} + \frac{\delta}{c} \big)\frac{8M^2}{\tilde{\sigma}^2_{\min}(G_o)} \bigg)\cdot \notag\\
&\hspace{1.5em} \|{x}^{k+1} - x^*\|^2 \notag \\
& \hspace{-1.5em} + \bigg( s + \big(\frac{1}{2c_1} + \frac{\delta}{c} \big)\frac{4c^2\sigma^4_{\max}(G_o)}{\tilde{\sigma}^2_{\min}(G_o)} \bigg) n (\tau^k)^2. \nonumber
\end{align}

For convenience, set the constants as
\begin{align*}
c_1=&\frac{c}{2} + \frac1{m(2m\rho-M^2)\tilde{\sigma}^2_{\min}(G_o)}
\bigg(4M^2(2m\rho+M^2) \\
&+ 16m^2(2M^2+\rho^2) + 2c\sigma^2_{\max}(G_u)m(2m\rho-M^2)\bigg), \quad\\
c_2=&\frac{c_1c}{2c_1-c}, \quad\\
c_3=&\frac{1/2m+\rho/M^2}{2}=\frac{2m\rho+M^2}{4mM^2}
\in \left(\frac{1}{2m},\frac{\rho}{M^2}\right),
\end{align*}
where the range of $c_3$ is from the hypothesis that $\rho > \frac{M^2}{2m}$. Then (\ref{thm2step2aim}) is achieved with constants
\begin{align*}
\delta &\leq \min\bigg\{ \frac{\tilde{\sigma}^2_{\min}(G_o)}{4\sigma^2_{\max}(G_u)}-\frac{c}{2c_1},
\frac{c\tilde{\sigma}^2_{\min}(G_o)(2m\rho-M^2)}{32m(\rho^2+2M^2)}-\frac{c}{2c_1},\\
&\hspace{4em}\frac{\frac{m(2m\rho-M^2)}{2m\rho+M^2}-\frac{4M^2}{c_1\tilde{\sigma}^2_{\min}(G_o)}}
{\big( \frac{\rho}{2}+\frac{c\sigma^2_{\max}(G_u)}{4}
	+\frac{8M^2}{c\tilde{\sigma}^2_{\min}(G_o)} \big)} \bigg\}, \\
\gamma &=s + \left(\frac{\delta}{c} + \frac{1}{2c_1}\right)
\frac{4c^2\sigma^4_{\max}(G_o)}{\tilde{\sigma}^2_{\min}(G_o)}.
\end{align*}
Note that $\delta>0$ and $\gamma>0$.
	
\textbf{Step 3.} Now we prove the linear convergence of $V^k$ to $0$, which implies the linear convergence of $x^k$ to $x^*$. Using the censoring threshold rule $\tau^k = \alpha\cdot(\beta)^k$, we further rewrite (\ref{thm2step2aim}) as
\begin{equation}
(1+\delta)V^{k+1} \leq V^k + \gamma n\alpha^2\cdot(\beta^2)^k.\notag
\end{equation}
Analogous to the technique used in handling \eqref{eq:conv-mu}, it holds
\begin{align}
V^{k+1} & \leq (1+\delta)^{-1} \left(V^k + \gamma n\alpha^2\cdot(\beta^2)^k \right) \notag \\
& \hspace{-3em} \leq (1+\delta)^{-1} \left[ (1+\delta)^{-1} \left(V^{k-1} + \gamma n\alpha^2\cdot(\beta^2)^{k-1}\right)+  \gamma n\alpha^2\cdot(\beta^2)^k \right] \notag \\
& \hspace{-3em} \leq \hspace{10em} \ldots\notag\\
& \hspace{-3em} \leq (1+\delta)^{-(k+1)} V^0 	+ Cn\alpha^2\sum_{k'=0}^{k} \left( (1+\delta)^{-(k+1-k')} (\beta^2)^{k'} \right) \notag \\
& \hspace{-3em} = (1+\delta)^{-(k+1)} \left[V^0 + \gamma n\alpha^2 \sum_{k'=0}^{k} \left((1+\delta)\beta^2\right)^{k'} \right] \notag \\
& \hspace{-3em} \leq (1+\delta)^{-(k+1)} \left(V^0 + \frac{\gamma n\alpha^2}{1-(1+\delta)\beta^2} \right),\notag
\end{align}
where the last inequality holds when $(1+\delta)\beta^2<1$.

In summary, for any positive $\delta>0$ that satisfies
\begin{align}\label{eq:delta-bound}
\hspace{-1.5em}\delta \leq \min\bigg\{ &\frac{\tilde{\sigma}^2_{\min}(G_o)}{4\sigma^2_{\max}(G_u)}-\frac{c}{2c_1},
 \frac{c\tilde{\sigma}^2_{\min}(G_o)(2m\rho-M^2)}{32m(\rho^2+2M^2)}-\frac{c}{2c_1},\notag\\
&\frac{\frac{m(2m\rho-M^2)}{2m\rho+M^2}-\frac{4M^2}{c_1\tilde{\sigma}^2_{\min}(G_o)}}
{\big( \frac{\rho}{2}+\frac{c\sigma^2_{\max}(G_u)}{4}
+\frac{8M^2}{c\tilde{\sigma}^2_{\min}(G_o)} \big)},
\frac{1}{\beta^2}-1 \bigg\},
\end{align}
the energy function $V^k$ converges to $0$ at a linear rate of $\mathcal{O}((1+\delta)^{-k})$. Moreover, by the definition of $V^k$, it holds that $V^k \geq \frac{\rho}{2}\|x^k-x^*\|^2$. Thus, the primal variable $x^k$ converges to the unique optimal solution $x^*$ at $\mathcal{O}((1+\delta)^{-\frac{k}{2}})$.
\end{proof}

\begin{remark}\label{rmk:cor1}
If we set $c=\frac{8M}{\sigma_{\max}(G_u)\tilde\sigma_{\min}(G_o)}$ and $\rho=M \kappa_f$ and further set $\frac1{2c_1}=\frac\delta c$, $\frac1{2c_2}=\frac1c - \frac1{2c_1}$ and $c_3=\frac2{3m}$ in \eqref{thm2step2}, then following the proof of Theorem \ref{theorem:linear}, we can derive another upper bound of $\delta$ as shown in \eqref{eq:cor} of Corollary \ref{cor:delta}.
\end{remark}

\section{Proof of Theorem \ref{theorem:sublin}}
\label{app_sublin}

\begin{proof}
\textbf{Step 1.} As in Step 1 of the proof of Theorem \ref{theorem:linear}, we obtain the inequality \eqref{th2-step1-3}.	
	
\textbf{Step 2.} Now our aim is different to that in Step 2 of the proof of Theorem \ref{theorem:linear}, as we are going to find constants $q>0$ and $\eta^k\geq 0$ as well as a time index $k_0$ such that
\begin{equation}\label{thm3step2aim}
(k+1)^q\left(V^{k+1}+\eta^{k+1}\right) \leq (k)^q\left(V^k+ \eta^k\right),
\end{equation}
for all $k\geq k_0$.
	
From \eqref{thm3step2aim}, in which $k_0$, $q$ and $\eta^k$ will be determined later, we have
\begin{align*}
&(k+1)^q\left(V^{k+1}+\eta^{k+1}\right) \\
=& (k)^qV^{k+1}+ \left[(k+1)^q-(k)^q\right]
\big( \frac{\rho}{2}\|x^{k+1}-x^*\|^2 \\
&+ c\|z^{k+1}-z^*\|^2 +\frac{1}{c}\|\phi^{k+1}-\phi^*\|^2 \big) + (k+1)^q \eta^{k+1}\\
\overset{\eqref{th2-step1-3}}{\leq}&
(k)^qV^k - c(k)^q\|z^k - z^{k+1}\|^2 \\
&- (k)^q\big( \frac{\rho}{2} - \frac{c_3M^2}{2} \big)\|x^k - x^{k+1}\|^2 \\
&- (k)^q\big( \frac{1}{c} - \frac{1}{2c_1} - \frac{1}{2c_2} \big) \|\phi^k-\phi^{k+1}\|^2 \notag \\
&-\bigg\{(k)^q\big(m-\frac{1}{2c_3}\big) - \big[(k+1)^q-(k)^q\big]\frac{\rho}{2}\bigg\} \cdot\\
&\hspace{2em}\|x^{k+1} - x^*\|^2\\
&+ c\left[(k+1)^q-(k)^q\right] \|z^{k+1} - z^*\|^2 \\
&+ \bigg[ \frac{(k)^q}{2c_1} + \frac{(k+1)^q-(k)^q}{c} \bigg] \|\phi^{k+1} - \phi^*\|^2 \\
&+ (k)^q s n(\tau^k)^2 + (k+1)^q \eta^{k+1} \\
\overset{\eqref{th2-step2-2},\eqref{th2-step2-3}}{\leq}& (k)^qV^k
- (k)^q \bigg[c - \frac{1}{2c_1} - \frac{(k+1)^q-(k)^q}{(k)^q}\cdot\\
&\hspace{6.5em}\frac{4c\sigma^2_{\max}(G_u)}{\tilde{\sigma}^2_{\min}(G_o)}\bigg] \|z^k - z^{k+1}\|^2 \\
&- (k)^q \bigg\{ \frac{\rho}{2} - \frac{c_3M^2}{2} - \big(\frac{c}{2c_1} + \frac{(k+1)^q-(k)^q}{(k)^q} \big)\cdot\\
&\hspace{3.5em}\frac{8M^2 + 4\rho^2}{c\tilde{\sigma}^2_{\min}(G_o)} \bigg\}\|x^k - x^{k+1}\|^2 \\
&- (k)^q\bigg[ m-\frac{1}{2c_3} -\frac{4M^2}{c_1\tilde{\sigma}^2_{\min}(G_o)}
- \frac{(k+1)^q-(k)^q}{(k)^q} \cdot\\
&\hspace{3.5em}\big(\frac{\rho}{2} + \frac{c\sigma^2_{\max}(G_u)}{4}
+ \frac{8M^2}{c\tilde{\sigma}^2_{\min}(G_o)} \big) \bigg]
\|x^{k+1} - x^*\|^2 \\
&- (k)^q\big( \frac{1}{c} - \frac{1}{2c_1} - \frac{1}{2c_2} \big) \|\phi^k-\phi^{k+1}\|^2 \\
&+ (k)^q t^k n (\tau^k)^2 + (k+1)^q \eta^{k+1},
\end{align*}
where $$t^k:=s + \frac{2c^2\sigma^4_{\max}(G_o)}{c_1\tilde{\sigma}^2_{\min}(G_o)} + \frac{(k+1)^q-(k)^q}{(k)^q}\frac{4c\sigma^4_{\max}(G_o)}{\tilde{\sigma}^2_{\min}(G_o)}>0.$$
	
Set the constants $c_1$, $c_2$ and $c_3$ the same values as those in the proof of Theorem \ref{theorem:linear}. Notice that $\frac{(k+1)^q-(k)^q}{(k)^q}=\left(1+\frac{1}{k}\right)^q-1\rightarrow0$ as $k$ goes to infinity. Then, there exists a time index $k_0$ such that for any $k \geq k_0$, it holds
\begin{align}
&\frac{(k+1)^q-(k)^q}{(k)^q} \leq \min\bigg\{ \frac{\tilde{\sigma}^2_{\min}(G_o)}{4\sigma^2_{\max}(G_u)} \big(c-\frac{1}{2c_1}\big), \notag\\
&\hspace{3em}\frac{c\tilde{\sigma}^2_{\min}(G_o)}{4( \rho^2+ 2M^2)} \big( \frac{\rho}{2} - \frac{c_3M^2}{2}
-  \frac{2(\rho^2+2M^2)}{c_1\tilde{\sigma}^2_{\min}(G_o)} \big),\notag\\
&\hspace{3em} \frac{m-\frac{1}{2c_3} -\frac{4M^2}{c_1\tilde{\sigma}^2_{\min}(G_o)}}
{\frac{\rho}{2} + \frac{c\sigma^2_{\max}(G_u)}{4} + \frac{8M^2}{c\tilde{\sigma}^2_{\min}(G_o)}},
\frac{\tilde{\sigma}^2_{\min}(G_o)}{4c\sigma^4_{\max}(G_o)}  \bigg\}, \label{thm3-k_bound}
\end{align}
where the right-hand side is larger than $0$. In this situation, $t^k\leq t := s + \frac{2c^2\sigma^4_{\max}(G_o)}{c_1\tilde{\sigma}^2_{\min}(G_o)} +1$.	Further, using the censoring threshold $\tau^{k} = \frac{\alpha}{(k)^r}$, we have
\begin{equation}\label{thm3step2aim2}
\hspace{-2em}(k+1)^q\big(V^{k+1}+\eta^{k+1}\big) \leq (k)^qV^k + \frac{tn\alpha^2}{(k)^{2r-q}} + (k+1)^q \eta^{k+1}.
\end{equation}

Now we determine the values of $q$ and $\eta^k$. Since $\sum_{k'=k}^\infty\frac{1}{(k')^{2r-q}}< \infty$ for any time index $k $ when $2r-q>1$, setting $(k)^q \eta^k:=\sum_{k'=k}^\infty\frac{tn\alpha^2}{(k')^{2r-q}}$ in \eqref{thm3step2aim2} leads to an equivalent form
\begin{equation}\notag
(k+1)^q\left(V^{k+1}+ \eta^{k+1}\right) \leq (k)^q\left(V^k+ \eta^k\right),
\end{equation}
which is exactly what we want in \eqref{thm3step2aim}. Therefore, for any $k \geq k_0$, it holds
$$V^{k}\leq V^{k}+\eta^{k} \leq \frac{(k_0)^q\left(V^{k_0}+\eta^{k_0}\right)}{(k)^q}.$$
That is, the energy function $V^k$ converges to $0$ at a sublinear rate of $\mathcal{O}((k)^{-q})$. Moreover, by the definition of $V^k$, it holds that $V^k \geq \frac{\rho}{2}\|x^k-x^*\|^2$. Thus, the primal variable $x^k$ converges to the unique optimal solution $x^*$ at a sublinear rate of $\mathcal{O}((k)^{-\frac{q}{2}})$.
\end{proof}

\end{document}